\numberwithin{equation}{section}
\def\cyee#1{{#1}}
\newcommand{\be}{\begin{equation}}
\newcommand{\ee}{\end{equation}}
\newcommand{\bea}{\begin{eqnarray}}
\newcommand{\eea}{\end{eqnarray}}
\newcommand{\bean}{\begin{eqnarray*}}
\newcommand{\eean}{\end{eqnarray*}}
\def\ba#1\ea{\begin{align}#1\end{align}}
\def\ban#1\ean{\begin{align*}#1\end{align*}}
\def\bat#1\eat{\begin{alignat}#1\end{alignat}}
\def\batn#1\eatn{\begin{alignat*}#1\end{alignat*}}
\def\bs#1\es{\begin{split}#1\end{split}}
\newcommand{\bse}{\begin{subequations}}
\newcommand{\ese}{\end{subequations}}
\newcommand{\bt}{\begin{theorem}}
\newcommand{\et}{\end{theorem}}
\newcommand{\bcj}{\begin{conjecture}}
\newcommand{\ecj}{\end{conjecture}}
\newcommand{\bl}{\begin{lemma}}
\newcommand{\el}{\end{lemma}}
\newcommand{\bc}{\begin{corollary}}
\newcommand{\ec}{\end{corollary}}
\newcommand{\bp}{\begin{proof}}
\newcommand{\ep}{\end{proof}}
\newcommand{\bd}{\begin{definition}}
\newcommand{\ed}{\end{definition}}
\newcommand{\br}{\begin{remark}}
\newcommand{\er}{\end{remark}}
\newcommand{\bas}{\begin{assumption}}
\newcommand{\eas}{\end{assumption}}
\newcommand{\bex}{\begin{example}}
\newcommand{\eex}{\end{example}}
\newcommand{\bqo}{\begin{quote}}
\newcommand{\eqo}{\end{quote}}
\newcommand{\bdc}{\begin{description}}
\newcommand{\edc}{\end{description}}
\newcommand{\bi}{\begin{itemize}}
\newcommand{\ei}{\end{itemize}}
\newcommand{\ben}{\begin{enumerate}}
\newcommand{\een}{\end{enumerate}}
\newtheorem{lemma}{Lemma}[section]
\newtheorem{proposition}[lemma]{Proposition}
\newtheorem{corollary}[lemma]{Corollary}
\newtheorem{assumption}[lemma]{Assumption}
\newtheorem{theorem}[lemma]{Theorem}
\newtheorem{definition}[lemma]{Definition}
\newtheorem{remark}[lemma]{Remark}
\newtheorem{example}[lemma]{Example}
\newtheorem{conjecture}[lemma]{Conjecture}
\newcommand\Om{\Omega}
\newcommand\om{\omega}
\newcommand\oma{{\omega_\ver}}
\newcommand\Gr{\nabla}
\newcommand\Dv{\nabla {\cdot}}
\newcommand\dv{\mathrm{div}}
\newcommand\scp{{\cdot}}
\newcommand\il{|\hspace{-0.02cm}|\hspace{-0.02cm}|}
\DeclarePairedDelimiter\enorm{\il}{\il}
\newcommand\Ho{H^1(\Om)}
\newcommand\Hoo{H^1_0(\Om)}
\newcommand\Lti[1]{L^2(#1)}
\newcommand\Hdv{\bm{H}(\dv,\Om)}
\newcommand\ie{i.e.}
\newcommand\cf{cf.}
\newcommand\eg{e.g.}
\newcommand\eal{{\em et al.}}
\newcommand\eq{\coloneqq}
\newcommand\pt{\partial}
\newcommand{\elm}{{K}}
\newcommand{\elmt}{{K'}}
\newcommand{\ver}{\cyee{\bm a}}
\newcommand{\Fhint}{\mathcal{F}_{\Omega}}
\newcommand\Th{\mathcal{T}}
\newcommand\Ta{{\mathcal{T}_{\ver}}}
\newcommand\Vhint{\mathcal{V}^{\mathrm{int}}}
\newcommand\Vhext{\mathcal{V}^{\mathrm{ext}}}
\newcommand\uu{u}
\newcommand\uh{u_{\cyee \idx}}
\newcommand\vh{v_{\cyee \idx}}
\newcommand\frh{{\bm \sigma}_\Th}
\newcommand{\eps}{\varepsilon}
\newcommand\tn{{\bm n}}
\newcommand\T{\mathcal{T}}
\newcommand\PP{{\mathbb P}}
\newcommand\RR{{\mathbb R}}
\DeclareMathOperator{\Div}{div}
\DeclareMathOperator{\Dim}{dim}
\DeclareMathOperator*{\argmin}{arg\,min}
\newcommand\ft{{\frac 1 2}}
\newcommand\mft{{-\frac 1 2}}
\newcommand{\lVERT}{\lvert\kern-0.25ex\lvert\kern-0.25ex\lvert}
\newcommand{\rVERT}{\rvert\kern-0.25ex\rvert\kern-0.25ex\rvert}
\newcommand{\NORM}[1]{{\lVERT#1\rVERT}}
\newcommand{\pair}[2]{\langle #1,#2 \rangle}
\newcommand{\norm}[1]{\lVert#1\rVert} 
\newcommand{\jump}[1]{\llbracket#1\rrbracket} 
\newcommand{\abs}[1]{\lvert#1\rvert} 
\newcommand{\tends}{\rightarrow}
\newcommand{\calF}{\mathcal{F}}
\newcommand{\frakF}{\mathfrak{F}}
\newcommand{\frakT}{\mathfrak{T}}
\newcommand{\calI}{\mathcal{I}}
\newcommand{\calR}{\mathcal{R}}
\newcommand{\calV}{\mathcal{V}}
\newcommand{\Ctr}{C_{\mathrm{Tr}}}
\newcommand{\dd}{\mathrm{d}}
\newcommand{\p}{\partial}
\newcommand{\Vh}{V_{\cyee \idx}}
\renewcommand{\dim}{d}
\newcommand{\sh}{\bm{\sigma}_{\cyee \idx}}
\newcommand{\RTN}{\bm{RTN}} 
\newcommand{\RTNp}{\RTN_{p}(\Th)}
\newcommand{\R}{\mathbb{R}}
\newcommand{\Vint}{\mathcal{V}^{\mathrm{int}}}
\newcommand{\Vext}{\mathcal{V}^{\mathrm{ext}}}
\newcommand{\Hdiv}{\bm{H}(\Div)}
\newcommand{\Hdiva}{\bm{H}(\Div,\oma)}
\newcommand{\RTNK}{\bm{RTN}_p(K)}
\newcommand{\RTNa}{\bm{RTN}_p(\Ta)}
\newcommand{\Qa}{Q_\idx^{\ver}}
\newcommand{\Va}{\bm{V}_\idx^\ver}
\newcommand{\bvh}{\bm{v}_{\cyee \idx}}
\newcommand{\bv}{\bm{v}}
\newcommand{\bn}{\bm{n}}
\newcommand{\Pih}{\Pi_{\cyee \idx}}
\newcommand{\calFint}{\Fhint}
\newcommand{\bL}{\bm{L}^2(\Om)}
\newcommand{\psia}{\psi_{\ver}}
\newcommand{\VK}{\mathcal{V}_K}
\newcommand{\phia}{\phi_{\cyee \idx}^{\ver}}
\newcommand{\ghia}{\gamma_\idx^{\ver}}
\newcommand{\sha}{\bm{\sigma}_{\cyee \idx}^{\ver}}
\newcommand{\woma}{w_{\ver}}
\newcommand{\wk}{w_{\elm}}
\newcommand{\wkt}{\widetilde{w}_{\elm}}
\newcommand{\ak}{\alpha_{\elm}}
\newcommand{\akp}{\alpha_{\elm^{\prime}}}
\newcommand{\af}{\alpha_F}
\newcommand{\btauh}{\bm{\tau}_{\cyee \idx}}
\newcommand{\bssub}{\bm{\sigma}_{\widetilde\idx}}
\newcommand{\calFa}{\calF_{\ver}}
\newcommand{\phih}{\phi_{\cyee \idx}}
\newcommand{\fh}{f_{\cyee \idx}}
\newcommand{\Cpo}{C_{p,1}}
\newcommand{\Cpd}{C_{p,\dim}}
\newcommand{\sD}{\{1{:}\dim\}}
\newcommand{\Qd}{Q^{\dim}}
\newcommand{\Kd}{K^\dim}
\newcommand{\Cpdp}{C_{p+1,\dim,\p K}}
\newcommand{\CpdK}{C_{p+1,\dim,K}}
\newcommand{\sreg}{\vartheta_{\T}}
\newif \ifT
\ifT \newcommand{\idx}{\Th} \else \newcommand{\idx}{h} \fi
\title{Simple and robust equilibrated flux a posteriori estimates for singularly perturbed reaction--diffusion problems\thanks{This project has received funding from the European Research Council (ERC) under the European Union's Horizon 2020 research and innovation program (grant agreement No 647134 GATIPOR).}}
\author{Iain Smears\footnotemark[2] \and Martin Vohral\'ik\footnotemark[3] \footnotemark[4]}
\begin{document}
\maketitle

\renewcommand{\thefootnote}{\fnsymbol{footnote}}

\footnotetext[2]{Department of Mathematics, University College London, Gower
Street, WC1E 6BT London, United Kingdom
(\href{mailto:i.smears@ucl.ac.uk}{\texttt{i.smears@ucl.ac.uk}}).}

\footnotetext[3]{Inria, 2 rue Simone Iff, 75589 Paris, France (\href{mailto:martin.vohralik@inria.fr}{\texttt{martin.vohralik@inria.fr}}).}

\footnotetext[4]{Universit\'e Paris-Est, CERMICS (ENPC), 77455 Marne-la-Vall\'ee 2, France.}

\renewcommand{\thefootnote}{\arabic{footnote}}

\begin{abstract}
\noindent We consider energy norm a posteriori error analysis of conforming finite element approximations of singularly perturbed reaction--diffusion problems on simplicial meshes in arbitrary space dimension. Using an equilibrated flux reconstruction, the proposed estimator gives a guaranteed global upper bound on the error without unknown constants, and local efficiency robust with respect to the mesh size and singular perturbation parameters. Whereas previous works on equilibrated flux estimators only considered lowest-order finite element approximations and achieved robustness through the use of boundary-layer adapted submeshes or via combination with residual-based estimators, the present methodology applies in a simple way to arbitrary-order approximations and does not request any submesh or estimators combination. The equilibrated flux is obtained via local reaction--diffusion problems with suitable weights (cut-off factors), and the guaranteed upper bound features the same weights. We prove that the inclusion of these weights is not only sufficient but also necessary for robustness of any flux equilibration estimate that does not employ submeshes or estimators combination, which shows that some of the flux equilibrations proposed in the past cannot be robust. To achieve the fully computable upper bound, we derive explicit bounds for some inverse inequality constants on a simplex, which may be of independent interest.
\end{abstract}

\bigskip

\noindent{\bf Key words:} Singular perturbation, a posteriori error analysis, local efficiency, robustness, equilibrated flux 

\section{Introduction} \label{sec_intr}

Let $\Om$ be a polygonal/polyhedral/polytopal domain in $\RR^d$, $d\geq 1$, with a Lipschitz-continuous boundary.
Let $\eps >0$ and $\kappa \geq 0$ be two fixed real parameters, and let $f\in L^2(\Om)$ be a given source term.
Consider the problem: find $\uu: \Om \rightarrow \RR$ such that
\begin{subequations} \label{eq_RD} \begin{alignat}{2}
    - \eps^2 \Delta \uu + \kappa^2 \uu & = f \qquad & & \mbox{ in } \, \Om, \label{eq_RD_eq} \\
    \uu & = 0 & & \mbox{ on } \, \pt \Om. \label{eq_RD_BC}
\end{alignat} \end{subequations}
Let $a(\cdot,\cdot)$ be the symmetric bilinear form defined by
\be \label{eq:a}
    a(w,v)\eq \eps^2 (\nabla w,\nabla v) + \kappa^2 (w,v), \qquad w, v \in \Hoo,
\ee
where $({\cdot},{\cdot})$ denotes the $L^2$-inner product of scalar- and vector-valued functions on $\Omega$, with associated norm $\norm{\cdot}$.
The restriction of the $L^2$-inner product to an open subset $\omega\subset \Om$ is denoted by~$({\cdot},{\cdot})_{\om}$, with associated norm $\norm{{\cdot}}_{\om}$.
The weak formulation of problem~\eqref{eq_RD} is to find $u\in \Hoo$ such that
\begin{equation}\label{eq:RD_weak}
\begin{aligned}
a(u,v) = (f,v) &&& \forall\, v\in \Hoo.
\end{aligned}
\end{equation}
The energy norm $\NORM{{\cdot}}$ associated to problem~\eqref{eq_RD} is then the norm induced by the form $a({\cdot},{\cdot})$, namely
\be \label{eq:en_norm}
\begin{aligned}
    \NORM{v}^2\eq a(v,v), &&& v\in \Hoo.
\end{aligned}
\ee
In this paper, we shall be primarily interested in the case where $\eps \ll \kappa$, when problem~\eqref{eq_RD} is said to be {\em singularly perturbed}.
Then, the accurate numerical approximation can be challenging due to the typical presence of sharp boundary and/or interior layers in the solution.

In order to present more specifically the focus of this work, let us consider a simplicial mesh $\Th$ of~$\Om$ and let $\Vh\eq \PP_p(\Th)\cap \Hoo$ denote the subspace of $\Hoo$ of piecewise polynomial functions of degree at most $p$, where $p\geq 1$ is a fixed integer.
The conforming Galerkin finite element approximation of~\eqref{eq:RD_weak} consists of finding $\uh \in \Vh$ such that
\begin{equation}\label{eq:RD_fem}
a(\uh,\vh) = (f,\vh) \qquad \forall\,\vh\in\Vh.
\end{equation}
The goal is to find a computable a posteriori error estimator $\eta(\uh)$ that satisfies
\begin{equation}\label{eq_est_rel_eff}
\begin{aligned}
    \enorm{\uu - \uh} \leq C_{\mathrm{rel}} \eta(\uh), &&&  \eta(\uh) \leq C_{\mathrm{eff}} \enorm{\uu - \uh} + \text{data oscillation}.
    \end{aligned}
\end{equation}
The first inequality in~\eqref{eq_est_rel_eff} is called reliability, while the second inequality is called (global) efficiency. A localized version of the efficiency bound is actually desirable. The quality of the estimator is determined by the product of the two constants $C_{\mathrm{rel}}$ and $C_{\mathrm{eff}}$.
A key requirement for singularly perturbed problems is to obtain estimators that are {\em robust} in the sense that both constants $C_{\mathrm{rel}}$ and $C_{\mathrm{eff}}$ are independent of the singular perturbation parameters $\eps$ and $\kappa$. Only such estimates can quantify well the error in the numerical approximation and be reliably used in adaptive algorithms which allow for efficient approximation of the localized features of the solution.

Recently, several methodologies for constructing error estimators that satisfy~\eqref{eq_est_rel_eff} in a robust way have been studied.
Verf{\"u}rth~\cite{Verf_RD_rob_a_post_98} (see also~\cite{Verf_rob_a_post_CD_05} or~\cite[Section~4.3]{Verf_13}) was probably the first to show robust bounds, in the framework of the so-called residual-based estimates.
For the problem at hand, these estimators take the form (up to the data oscillation term and possible generic constants)
\begin{equation}\label{eq:residual_estimator}
\eta_{\mathrm{res}}(\uh)^2 \eq \sum_{\elm\in\Th} \ak^2 \norm{r_\idx}_{\elm}^2 + \sum_{F\in\calFint} \eps^{-1} \af \norm{j_\idx}_{F}^2,
\end{equation}
where the local element and face residuals are defined respectively by
\bse\label{eq:res}\ba
    r_\idx|_{\elm} & \eq (f+\eps^2 \Delta_\idx \uh - \kappa^2 \uh)|_{\elm}, \label{eq:elm_res}\\
    j_\idx|_{F} & \eq - \eps^2 \jump{ \nabla \uh \scp \bn_F }_F, \label{eq:face_res}
\ea \ese
and where $\Delta_\T$ denotes the element-wise Laplacian, $\jump{ \nabla \uh \scp \bn_F }_F$ denotes the jump of the normal component of $\nabla \uh$ over the face $F$, $\calFint$ stands for the set of internal faces of the mesh $\Th$, and the weights (cut-off factors) take the form
\be \label{eq:residual_weights}
    \alpha_S \eq \min\left\{\frac{h_S}{\eps},\frac{1}{\kappa}\right\},
\ee
with $h_S$ being the diameter of $S$, where $S$ is either a simplex $\elm$ or a face $F$.
The resulting estimator $\eta_{\mathrm{res}}(\uh)$ is thus a straightforward extension from the pure diffusion case $\kappa = 0$ and is simple to implement in practice.
The proof that $\eta_{\mathrm{res}}$ satisfies the second inequality in~\eqref{eq_est_rel_eff} rests on a bubble function technique, where the face bubble functions are defined with respect to a {\em submesh} matching the boundary-layer length scales and are possibly very steeply decaying. Their role is to capture the sharp layers caused by the singular perturbation.
Note that these bubble functions, and hence the submeshes on which they are defined, are only employed in the analysis; thus they do not need to be constructed in practice.
Shortly after, Ainsworth and Babu{\v{s}}ka~\cite{Ains_Bab_rel_rob_a_post_RD_99} extended the method of equilibrated residuals, \cf~\cite{Ainsw_Oden_a_post_FE_00}, to satisfy~\eqref{eq_est_rel_eff} in a robust way for lowest-order approximations, \ie\ $p=1$.
In contrast to the residual-based estimators, a boundary-layer adapted submesh in each mesh element needs to be {\em constructed in practice} in order to evaluate the estimator.

Further progress has been made since, although, to the best of our knowledge, only in the case of lowest-order approximations where the polynomial degree $p=1$.
Robust estimates that are guaranteed ($C_{\mathrm{rel}} = 1$) and where $\eta(\uh)$ is fully computable have been obtained in Cheddadi {\em et
al.}~\cite{Ched_Fuc_Priet_Voh_guar_rob_FE_RD_09}.
This remedies that $C_{\mathrm{rel}}$ is unknown for residual-based estimates and that exact solutions of some infinite-dimensional boundary value problems on each element (which cannot be performed exactly in practice) are required in the equilibrated residuals approach.
The estimator in~\cite{Ched_Fuc_Priet_Voh_guar_rob_FE_RD_09} is based on an equilibrated flux $\sh$ belonging to a discrete subspace of $\Hdiv$ that satisfies the equilibration identity $\Dv \sh + \kappa^2 \uh = \fh$, where $\fh$ is a piecewise polynomial approximation of $f$.
The estimator is then composed of terms of the form
\[
    \min\left\{\norm{\eps \nabla \uh + \eps^{-1} \frh}_\elm, C \eps^{\mft} \af^\ft \norm{j_\idx}_{\pt \elm\setminus\pt\Omega}\right\}.
\]
Thus it can be seen as a {\em combination} between an equilibrated flux estimator for diffusion problems and the residual-based estimator of~\cite{Verf_RD_rob_a_post_98} for reaction--diffusion problems. No submesh is needed for the construction of the estimator.
Subsequently, Ainsworth and Vejchodsk{\'y}~\cite{Ainsw_Vej_guar_rob_RD_11,Ainsw_Vej_guar_rob_RD_14} proceed in two stages.
First, equilibrated face fluxes are computed as in~\cite{Ains_Bab_rel_rob_a_post_RD_99}, and then, equilibrated fluxes are obtained by face liftings, so that the final estimate $\eta(\uh)$ is also fully computable and the first inequality in~\eqref{eq_est_rel_eff} is guaranteed with  $C_{\mathrm{rel}} = 1$. As in~\cite{Ains_Bab_rel_rob_a_post_RD_99}, though, boundary-layer adapted submeshes appear in the construction of the estimator.

The use of a submesh complicates the construction and implementation of the equilibrated flux estimators of~\cite{Ainsw_Vej_guar_rob_RD_11,Ainsw_Vej_guar_rob_RD_14}. Moreover, it is likely to be even more involved when moving beyond lowest-order approximations.
In this work, by further developing the idea in~\cite{Ched_Fuc_Priet_Voh_guar_rob_FE_RD_09}, we show how to obtain {\em simple}, i.e. avoiding any submesh, yet {\em robust} equilibrated flux estimators for {\em arbitrary-order} approximations.
The a posteriori error estimates presented in this paper are based on a locally computable flux~$\sh$ and potential approximation $\phih$, respectively belonging to discrete subspaces of $\Hdv$ and $L^2(\Om)$ of the current mesh $\Th$, that satisfy the key {\em equilibration property}
\begin{equation}\label{eq:equilibration}
  \Dv\sh + \kappa^2 \phih = \Pih f,
\end{equation}
where $\Pih \colon L^2(\Om) \tends \PP_p(\Th)$ denotes the $L^2$-orthogonal projection operator.
The upper bound on the error then has the simple form
\begin{equation}\label{eq:bound_simplified}
\NORM{u-\uh}^2  \leq\sum_{\elm\in\Th} \left[ \wk \norm{\eps \nabla  \uh + \eps^{-1} \sh}_\elm +  \norm{\kappa\left(\uh-\phih\right)}_\elm + \wkt \norm{f-\Pih f}_{\elm}\right]^2,
\end{equation}
where $\wk$ is an elementwise computable {\em weight} (cut-off factor) such that
\[
    \wk = \min\left\{1, C_* \sqrt{\frac{\eps}{ \kappa h_\elm} } \right\}, \quad \wkt = \min\left\{\frac{h_{\elm}}{\pi\eps}, \frac{1}{\kappa} \right\},
\]
with a fixed computable constant $C_*$ given by~\eqref{eq_C_st}; see Theorem~\ref{thm:upper_bound} below for further details.
The equilibrated flux $\sh$ and approximate potential $\phih$ in~\eqref{eq:equilibration}, \eqref{eq:bound_simplified} are obtained by an extension of the patchwise equilibration of~\cite{Dest_Met_expl_err_CFE_99,Braess_Scho_a_post_edge_08}, see also~\cite{Brae_Pill_Sch_p_rob_09,Ern_Voh_p_rob_15}.

Furthermore, we prove robustness and efficiency of the estimator~\eqref{eq:bound_simplified} by showing that its local contributions are bounded, up to a constant, by the local residual estimators. More precisely, for each $\elm\in\Th$, we show that
\begin{equation}\label{eq:intro_residual_lower_bound}
\begin{aligned}
  \wk^2 \norm{\eps \nabla  \uh + \eps^{-1} \sh}_\elm^2 +  \norm{\kappa\left(\uh-\phih\right)}_\elm^2
 & \lesssim \sum_{\elmt\in\frakT_\elm} \akp^2 \norm{r_\idx}_{\elm^{\prime}}^2 + \sum_{F\in\frakF_\elm} \eps^{-1} \af\norm{j_\idx}_{F}^2
\\ &  \lesssim \sum_{\elmt\in\frakT_\elm} \left[ \NORM{u-\uh}_{\elmt}^2 + \alpha_{\elmt}^2 \norm{f-\Pih f}_{\elmt}^2 \right],
 \end{aligned}
\end{equation}
where $\frakT_\elm$ and $\frakF_\elm$ denote the set of elements and faces in a suitable neighbourhood of $\elm$ and
\be \label{eq_en_el}
    \NORM{v}^2_\elm \eq \eps^2 \norm{\nabla v}^2_\elm + \kappa^2 \norm{v}^2_\elm \qquad \elm \in \Th,
\ee
see Proposition~\ref{prop:residual_lower_bound} and Theorem~\ref{thm:efficiency_robustness} below for full details. Crucially, the constants hidden in $\lesssim$ in~\eqref{eq:intro_residual_lower_bound} are independent of the mesh-sizes $h_\elm$ and problem parameters $\eps$ and $\kappa$, depending only on the shape-regularity of $\Th$, the space dimension $d$, and the polynomial degree $p$.
Hence, just as for residual-based estimates, equilibrated flux estimates have a {\em straightforward extension} from the pure diffusion case $\kappa = 0$, based on including appropriate weights (cut-off factors) and not requiring computations of quantities over any submesh or combination with the residual estimators.
In light of these results, we believe that the claims in~\cite{Verf_note_const_free_09,Verf_13} of a ``structural defect'' of the robustness of the equilibrated fluxes estimators are not generally valid.

As a side result, we also prove in Proposition~\ref{prop:flux_lower_bound} that the weights $\wk$ in~\eqref{eq:bound_simplified} are {\em necessary} for robustness of {\em any} equilibrated flux estimate involving the terms $\norm{\eps \nabla \uh + \eps^{-1} \frh}_\elm$ whenever $\sh$ is a piecewise polynomial on $\Th$ (and thus its construction does not involve any submesh), regardless of the precise details of the construction of $\sh$.
This proves that several flux equilibrations proposed in the past cannot be robust with respect to reaction dominance in general (although in many constellations, no loss of robustness may be numerically observed), including those of Repin and Sauter~\cite{Rep_Sau_funct_a_post_react_dif_06}, Ainsworth~\eal\ \cite{Ains_All_Bar_Rank_a_post_ADR_13}, Eigel and Samrowski~\cite{Eig_Sam_func_a_post_RD_14}, Eigel and Merdon~\cite{Eig_Mer_equil_ADR_16}, and~Vejchodsk\'y~\cite{Vej_compl_a_post_12,Vej_RD_ENUMATH_17,Vej_qual_equil_15}.

We only treat isotropic meshes. Results for anisotropic meshes can be found in Kunert~\cite{Kun_rob_a_post_sing_pert_RD_01},
Grosman~\cite{Gros_eqil_res_comp_err_RD_anis_06}, Apel {\em et
al.}~\cite{Ap_Nic_Sirch_anis_ADR_11}, Zhao and
Chen~\cite{Zhao_Chen_a_post_RD_anis_14}, or Kopteva~\cite{Kopt_a_post_RD_inf_anis_17,Kopt_a_post_equi_fl_anis_RD_17}.
Also, we are solely interested in the energy norm.
Robust estimates in the maximum norm are obtained in Demlow and Kopteva~\cite{Dem_Kopt_a_post_RD_inf_16} and, on possibly anisotropic meshes, in Kopteva~\cite{Kopt_a_post_RD_inf_15} for $p=1$ any in Linss~\cite{Lins_RD_HO_inf_14} for any order $p \geq 1$ in one space dimension. We refer to Stevenson~\cite{Stev_cvg_RD_05} for robust convergence, and we refer to Faustmann and Melenk~\cite{Faus_Mel_exp_cvg_RD_17} and the references therein for balanced norms.
Finally, extensions to variable coefficients $\eps$ and $\kappa$ can be treated easily as in~\cite{Ainsw_Vej_guar_rob_RD_14}, whereas inhomogeneous Dirichlet and Neumann boundary conditions, mixed parallelepipedal--simplicial meshes, meshes with hanging nodes, and approximations with varying polynomial degree $p$ can be treated as in Dolej{\v{s}}{\'{\i}} {\em et al.}~\cite{Dol_Ern_Voh_hp_16}.

\section{Construction of the equilibrated flux}\label{sec:construction}

We present in this section the construction of our equilibrated flux $\sh$ and of the potential approximation $\phih$.

\subsection{Notation}

Let $\Th$ be a matching simplicial partition of the domain $\Om$, \ie, $\bigcup_{\elm \in \Th} \elm = \overline \Om$, any element $\elm \in \Th$ is a closed simplex (interval when $d=1$, triangle when $d=2$, tetrahedron when $d=3$), and the intersection of two different simplices is either empty, or a vertex, or their common $l$-dimensional face, $1 \leq l \leq d-1$.
We denote by $\sigma_\T>0$ the shape-regularity parameter of the mesh $\Th$, i.e.\
\begin{equation}\label{eq:regularity_parameter}
\sreg \coloneqq \max_{K\in\T} \frac{h_K}{\rho_K},
\end{equation}
where $\rho_K$ is the diameter of the largest ball contained in $K$.
For each element $\elm\in \Th$ and for a fixed integer $p\geq 1$, let $\PP_p(\elm)$ denote the space of polynomials of total degree at most $p$ on $\elm$.
Let
\[
    \PP_p(\Th) \eq \{ v\in L^2(\Om),\; v|_\elm \in \PP_p(\elm) \quad \forall \elm\in\Th\}
\]
denote the space of scalar piecewise polynomials of degree at most $p$ over $\Th$.
Let $\Pih \colon L^2(\Om) \tends \PP_p(\Th)$ denote the $L^2$-orthogonal projection operator from $L^2(\Om)$ onto $\PP_p(\Th)$.
We additionally consider $\bL\eq L^2(\Om;\R^\dim)$ and  $\RTNp\subset \bL$ the piecewise Raviart--Thomas--N\'ed\'elec space defined by
\be \bs \label{eq_RTN_p}
    \RTNp & \eq \{\bvh \in \bL,\;\bvh|_{\elm} \in \RTNK \quad \forall \elm\in\Th\}, \\
    \RTNK & \eq \PP_p(\elm;\R^\dim) + \PP_p(\elm)\bm{x}.
\es \ee

For any subset $S$ of $\overline{\Om}$, let $h_S$ denote the diameter of $S$. Thus, for instance, $h_\elm$ denotes the diameter of the element $\elm\in\Th$.
Let $\calV$ denote the set of vertices of the mesh $\Th$. It is partitioned into the set of interior vertices $\Vint \eq \{ \ver\in\calV,\quad \ver\in\Om\} $, and boundary vertices $\Vext \eq \calV\setminus\Vint$.
For each vertex $\ver \in \calV$, the function $\psia$ is the hat function associated with $\ver$, \ie, $\psia \in \PP_1(\Th) \cap \Ho$ taking value $1$ in the vertex $\ver$ and $0$ in the other vertices. The set~$\oma$ is the interior of the support of $\psia$ with associated diameter $h_{\oma}$.
Furthermore, let $\Ta$ denote the restriction of the mesh $\Th$ to~$\oma$, and let $\calFa$ denote the set of interior faces of $\Ta$, i.e. the faces of $\Ta$ that contain the vertex $\ver$ for $\ver \in \Vhint$, without those on $\pt \Om$ for $\ver \in \Vhext$.
For each element $\elm\in \Th$, we collect in $\VK$ the set of vertices of $\calV$ belonging to $\elm$. We also define $\frakT_\elm \eq \bigcup_{\ver\in\calV_\elm} \Ta$ and $\frakF_\elm \eq \bigcup_{\ver\in\calV_\elm} \calFa$.

Throughout this work, the notation $a\lesssim b$ means that $a\leq C b$ with a constant $C$ that only depends on the shape-regularity parameter $\sreg$ of $\Th$, on the space dimension $d$, and on the polynomial degree $p$, so that it is in particular independent of the mesh-sizes $h_{\elm}$ and of the problem parameters $\eps$ and $\kappa$; $a\simeq b$ then stands for $a\lesssim b$ and simultaneously $b\lesssim a$.

\subsection{Trace and inverse inequalities} \label{sec_ineqs}

We first recall two inequalities that we will rely on.

\begin{lemma}[Trace inequality with explicit constant]
For all $\elm\in \Th$ and for all $v\in H^1(\elm)$ that satisfy $(v,1)_{\elm} =0$, i.e., that have vanishing mean-value on $\elm$, there holds
\begin{equation}\label{eq:trace_inequality}
\begin{aligned}
\norm{v}_{\partial\elm} \leq \Ctr \norm{\nabla v}_{\elm}^\ft\norm{v}_{\elm}^\ft, &&&  \Ctr \coloneqq \sqrt{\sreg (d+1) \left( 2 + d/\pi \right) }.
\end{aligned}
\end{equation}
\end{lemma}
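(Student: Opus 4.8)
The plan is to prove the bound one face at a time and then sum over the $d+1$ faces of $\elm$, which is what will generate the factor $(d+1)$ appearing in $\Ctr$. Fix a face $F\in\EK$ and let $\bm{a}_F$ denote the vertex of $\elm$ opposite to $F$. The central device is the divergence theorem applied to the vector field $\bm{w}(\bm x)\eq \bm x - \bm{a}_F$, whose divergence is the constant $d$. The reason for this particular choice is a convenient cancellation on the boundary: every face of $\elm$ other than $F$ contains the vertex $\bm{a}_F$, so for $\bm x$ in such a face the vector $\bm x - \bm{a}_F$ lies in its hyperplane and hence $\bm{w}\cdot\bm n = 0$ there. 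On $F$ itself, $\bm{w}\cdot\bm n_F$ is the constant $\ell_F$, the distance from $\bm{a}_F$ to the hyperplane of $F$, i.e.\ the height of $\elm$ over $F$.

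Integrating the identity $\nabla\cdot(v^2\bm{w}) = d\,v^2 + 2v\,\nabla v\cdot\bm{w}$ over $\elm$ (valid for $v\in H^1(\elm)$ by density), only the contribution of $F$ survives on $\partial\elm$, so that
\[
  \ell_F\,\norm{v}_F^2 = d\,\norm{v}_\elm^2 + 2\int_\elm v\,\nabla v\cdot(\bm x - \bm{a}_F)\,\dd\bm x.
\]
Since $\abs{\bm x - \bm{a}_F}\leq h_\elm$ on $\elm$, the Cauchy--Schwarz inequality bounds the last integral by $2 h_\elm\norm{v}_\elm\norm{\nabla v}_\elm$. For the first term I would invoke the Poincaré inequality on the convex simplex $\elm$ for the zero-mean function $v$, in its sharp Payne--Weinberger form $\norm{v}_\elm\leq (h_\elm/\pi)\norm{\nabla v}_\elm$, and use it once to write $\norm{v}_\elm^2\leq (h_\elm/\pi)\norm{v}_\elm\norm{\nabla v}_\elm$. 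Combining the two estimates gives $\norm{v}_F^2 \leq (h_\elm/\ell_F)\,(2 + d/\pi)\,\norm{v}_\elm\norm{\nabla v}_\elm$.

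It remains to control $h_\elm/\ell_F$ by the shape-regularity parameter, and this is the one genuinely geometric point, which is also what secures the stated constant rather than one larger by a factor $\sqrt{2}$: the height $\ell_F$ is at least $\rho_\elm$, the diameter of the largest ball inscribed in $\elm$. Indeed, projecting orthogonally onto the direction $\bm n_F$, the simplex $\elm$ maps onto an interval of length exactly $\ell_F$ (with $F$ at one end and $\bm{a}_F$ at the other), while the inscribed ball, being contained in $\elm$, maps onto an interval of length equal to its diameter $\rho_\elm$; hence $\rho_\elm\leq \ell_F$ and therefore $h_\elm/\ell_F \leq h_\elm/\rho_\elm \leq \sreg$. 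This yields $\norm{v}_F^2 \leq \sreg\,(2 + d/\pi)\,\norm{v}_\elm\norm{\nabla v}_\elm$; summing over the $d+1$ faces $F\in\EK$ and using $\norm{v}_{\partial\elm}^2 = \sum_{F\in\EK}\norm{v}_F^2$ gives $\norm{v}_{\partial\elm}^2 \leq (d+1)\,\sreg\,(2 + d/\pi)\,\norm{v}_\elm\norm{\nabla v}_\elm$, and taking square roots produces exactly $\Ctr = \sqrt{\sreg(d+1)(2+d/\pi)}$. The main obstacle, modest but essential, is precisely the inequality $\ell_F\geq\rho_\elm$: replacing the height by the inradius (half of $\rho_\elm$) would degrade the constant, so the projection argument is what makes the bound sharp.
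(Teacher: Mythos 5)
Your proof is correct and takes essentially the same route as the paper: the per-face bound you derive via the divergence theorem applied to $v^2(\bm{x}-\bm{a}_F)$, together with the geometric estimate $h_\elm/\ell_F\leq h_\elm/\rho_\elm\leq\sreg$, is precisely the content of the result the paper cites from \cite[Lemma~1.49]{Di_Pietr_Ern_book_12}, and your remaining steps (the Payne--Weinberger Poincar\'e inequality $\norm{v}_\elm\leq (h_\elm/\pi)\norm{\nabla v}_\elm$ for the zero-order term and summation over the $d+1$ faces) coincide exactly with the paper's, yielding the same constant $\Ctr$. The only difference is that you make the cited per-face trace inequality self-contained rather than invoking it as a black box.
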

\begin{proof}
We refer the reader to \cite[Lemma~1.49]{Di_Pietr_Ern_book_12} for the explicit constants of the trace inequality for general functions in $H^1(\elm)$; namely, for each face $F\subset \p \elm$,
\[
\norm{v}^2_{F} \leq \sreg \left( 2\norm{\nabla v}_K + d/h_K \norm{v}_{\elm}\right) \norm{v}_{\elm}.
\]
Then, we additionally apply the Poincar\'e inequality $\norm{v}_{\elm}\leq h_K/\pi \norm{\nabla v}_{\elm}$ for functions with vanishing mean-value on $\elm$, and sum over all the faces $F$ to obtain \eqref{eq:trace_inequality}.
\end{proof}

\begin{lemma}[Inverse inequalities with explicit constants]
For any $K\in \T$ and any $\bm{v}\in \RTNK$, we have
\begin{equation}\label{eq:inverse_inequality}
\begin{aligned}
h_K^{1/2}\norm{\bm{v}\cdot \bm{n}}_{\p K} &\leq \Cpdp \norm{\bm{v}}_K, &&&
 h_K \norm{\nabla \cdot \bm{v}}_K &\leq \CpdK \norm{\bm{v}}_K,
\end{aligned}
\end{equation}
where the constants $\Cpdp$ and $\CpdK$ are given by
\begin{align}
\Cpdp &\coloneqq \sqrt{(d+1)(p+2)(p+d+1)\sreg},  \\
\CpdK & \coloneqq \sqrt{d}\, \sreg \frac{\sqrt{5}}{4}\, (2\sqrt{2})^{\dim} \sqrt{p (p+1) (p+2) (p+3).}
\end{align}
\end{lemma}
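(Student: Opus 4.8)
The plan is to treat the two bounds separately, each time reducing a quantity involving $\bm v\in\RTNK$ to a polynomial inequality on $\elm$ with an explicitly traceable constant. In both cases the decisive observation is that, although the normal trace $\bm v\cdot\bn$ and the divergence $\nabla\cdot\bm v$ have degree $p$, the Cartesian components of $\bm v$ itself lie in $\PP_{p+1}(\elm)$, because of the $\PP_p(\elm)\bm x$ part of $\RTNK$. This degree shift from $p$ to $p+1$ is what ultimately produces the somewhat unexpected factors in the constants.

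For the first (trace) inequality, I would fix a face $F$ of $\elm$ with constant unit outward normal $\bn_F$ and set $g_F\eq\bm v\cdot\bn_F$. Since $\bn_F$ is constant, $g_F\in\PP_{p+1}(\elm)$ is a genuine polynomial on all of $\elm$ (not merely on $F$), with $\norm{g_F}_\elm\le\norm{\bm v}_\elm$ pointwise. I would then invoke an explicit polynomial trace inequality on a simplex of the Warburton--Hesthaven type, $\norm{w}_{\p\elm}^2\le\frac{(p+2)(p+d+1)}{d}\frac{|\p\elm|}{|\elm|}\norm{w}_\elm^2$ for $w\in\PP_{p+1}(\elm)$, apply it to $w=g_F$, use $\norm{g_F}_F\le\norm{g_F}_{\p\elm}$, and sum over the $d+1$ faces. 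The degree $p+1$ is exactly what yields the factor $(p+2)(p+d+1)$, while summation over the faces produces the factor $(d+1)$. Multiplying by $h_\elm$ and bounding the surface-to-volume ratio $h_\elm|\p\elm|/|\elm|$ by shape regularity then gives $\Cpdp$; note that the measure ratio appears under the square root, which is why only a single power $\sqrt{\sreg}$ enters here.

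For the divergence inequality the mechanism is instead a Markov-type inverse estimate, since $\nabla\cdot\bm v$ is a first derivative of $\bm v$. First I would apply Cauchy--Schwarz to $\nabla\cdot\bm v=\sum_{i=1}^d\p_i v_i$ to obtain $\norm{\nabla\cdot\bm v}_\elm^2\le d\sum_i\norm{\p_i v_i}_\elm^2$, accounting for the factor $\sqrt d$. Each component $v_i\in\PP_{p+1}(\elm)$, so it remains to bound a single directional derivative of a polynomial by the polynomial itself. The plan is to establish this on a reference simplex through collapsed (Duffy) coordinates, in which a tensorized basis of Jacobi polynomials diagonalizes the relevant weighted $L^2$ structure and reduces the estimate to a one-dimensional $L^2$ Markov inequality whose constant for degree $n$ is controlled by $\sqrt{n(n+1)(n+2)(n+3)}$; this is the source of $\sqrt{p(p+1)(p+2)(p+3)}$. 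Transporting the reference bound back to $\elm$ by the affine change of variables yields one factor of $\sreg$: the derivative scaling contributes $1/\rho_\elm$, which combines with the premultiplier $h_\elm$ into $h_\elm/\rho_\elm=\sreg$, explaining why this constant is linear rather than square-root in $\sreg$. The Jacobian weights of the collapsed map, together with the bookkeeping across the $d$ coordinate directions, generate the dimensional factor $(2\sqrt2)^d$ and the numerical constant $\sqrt5/4$.

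The main obstacle is precisely this reference-simplex Markov inequality with a clean explicit constant: on a simplex the Cartesian partial derivatives do not preserve the orthogonal (Jacobi) polynomial structure as neatly as on an interval or a cube, and the collapsed coordinates introduce singular Jacobian weights of the form $(1-\eta_i)^{k}$ whose ratios must be bounded carefully. Pinning these down, via the three-term recurrence together with the derivative and orthogonality relations for Jacobi polynomials, is where the constants $(2\sqrt2)^d$ and $\sqrt5/4$ are determined; everything else is affine scaling and shape-regularity bookkeeping that I would relegate to routine computation.
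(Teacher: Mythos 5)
Your treatment of the normal-trace bound is essentially the paper's own proof: the paper likewise exploits that the Cartesian components of $\bm{v}\in\RTNK$ lie in $\PP_{p+1}(\elm)$, invokes the Warburton--Hesthaven trace inequality at degree $p+1$ (the source of the factor $(p+2)(p+d+1)$), and controls the surface-to-volume ratio through shape regularity (Lemma~\ref{lem:geometric_bounds}), which is where the factor $(d+1)$ and the single power of $\sreg$ under the square root come from; your per-face bookkeeping with $\norm{\bm{v}\cdot\bn_F}_{\elm}\le\norm{\bm{v}}_{\elm}$ reproduces the same constant. The outer skeleton of your divergence bound also matches: Cauchy--Schwarz for the factor $\sqrt{d}$, degree $p+1$ of the components, and an affine pullback (the paper uses the Piola transform, for which $\nabla\cdot\bm{v}=\nabla_{\hat{x}}\cdot\bm{\hat{v}}/\abs{\det J_K}$ is exact) whose factor $\norm{J_K^{-1}}_2\le\sqrt{2}/\rho_K$ combines with the premultiplier $h_K$ into $\sqrt{2}\,\sreg$, exactly as you describe.

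The gap is the step you yourself flag as the main obstacle: the inverse inequality on the unit simplex with the explicit constant $\Cpd\le\frac{\sqrt{5}}{4}(2\sqrt{2})^{\dim}\Cpo$. You assert that collapsed Duffy coordinates and Jacobi-polynomial computations would ``generate'' the factors $(2\sqrt{2})^{\dim}$ and $\sqrt{5}/4$, but you give no derivation, and there is no reason these particular numbers would emerge from that machinery: in the paper they are artifacts of a purely geometric argument, not of any orthogonal-polynomial identity. Concretely, the paper (Theorem~\ref{thm:const_unit_simplex}) first tensorizes the one-dimensional bound of Koutschan--Neum\"uller--Radu, $\Cpo\le\frac{1}{\sqrt{2}}\sqrt{(p-1)p(p+1)(p+2)}$, to the hypercube; it then splits the simplex into a slab $\{x_{\dim}\le 1-1/\dim\}$, treated slice-wise by induction on the dimension using $(1-x_{\dim})^{-2}\le \dim^2$, and a corner region $K_{\dagger}$, which is sandwiched between a sub-parallelepiped $Q_{\dagger}$ (the image of a hypercube of side $1/\dim$ under a measure-preserving affine self-map of the simplex) and the simplex itself, so that the hypercube inequality applies after writing $v_{x_1}=\widetilde{v}_{\xi_2}-\widetilde{v}_{\xi_1}$; this yields $C_{p,2}\le 2\sqrt{5}\,\Cpo$, and a coarser two-slab subdivision for $\dim\ge 3$ gives $\Cpd\le(2\sqrt{2})^{\dim-2}C_{p,2}$, whence the stated constant. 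Your Duffy route faces exactly the difficulty you name --- Cartesian derivatives do not act diagonally on the collapsed Jacobi basis, and the singular weights $(1-\eta_i)^k$ must be compensated --- so carrying it through to a clean explicit constant is an unexecuted computation, not routine bookkeeping; until it is done, the stated value of $\CpdK$ is unproven. Note also that the one-dimensional quartic constant you quote is itself a nontrivial cited result (established by symbolic computation in \cite{Koutschan_Neumuller_16}), to be invoked rather than rederived in passing.
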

\begin{proof}
See Appendix~\ref{sec:appendix}.
\end{proof}

In practice, possibly sharper constants can be obtained for the inequalities in \eqref{eq:inverse_inequality} by solving numerically small eigenvalue problems on each mesh element, or on a reference element in combination with bounds for the influence of the affine mapping.

We will need below the following constant composed of the constants of the
trace and inverse inequalities~\eqref{eq:trace_inequality} and~\eqref{eq:inverse_inequality}:
\be \label{eq_C_st}
  C_* \coloneqq \frac{1}{\sqrt{2}}\left( \frac{1}{\sqrt{\pi}}\CpdK + \Ctr\, \Cpdp  \right).
\ee

\subsection{Equilibrated flux $\sh$ and postprocessed potential $\phih$}

The construction of the auxiliary variables $\sh$ and $\phih$ giving the equilibration~\eqref{eq:equilibration} is based on independent local mixed finite element approximations of residual problems over the patches of elements around mesh vertices.

For each $\ver\in\calV$, let $\PP_p(\Ta)$, respectively $\RTNa$, be the restriction of the space~$\PP_p(\Th)$, respectively $\RTNp$, to the patch $\Ta$ around the vertex $\ver$. The local mixed finite element spaces $\Va$ and $\Qa$ are defined by
\bse \label{eq_spaces} \begin{align}
\Va & \eq
\begin{cases}
   \left\{\bvh \in \Hdiva \cap \RTNa,\, \bvh\scp \bn =0\text{ on }\p\oma \right\} & \, \text{if }\ver\in\Vint,\\
    \left\{\bvh \in \Hdiva\cap \RTNa ,\, \bvh\scp \bn =0\text{ on }\p\oma\setminus\p\Om  \right\}& \, \text{if }\ver\in\Vext,
\end{cases}
\label{eq_spaces_V} \\
\Qa & \eq \begin{cases}
   \PP_p(\Ta) & \, \text{if } \kappa > 0 \text{ or } \ver\in\Vext,\\
    \left\{ q_\idx\in \PP_p(\Ta),\, (q_\idx,1)_\oma = 0\right\}& \, \text{if } \kappa = 0  \text{ and }\ver\in\Vint,
\end{cases} \label{eq_spaces_Q}
\end{align} \ese
see Figure~\ref{fig:patches}.

\begin{figure}
\centerline{\includegraphics[width=0.4\textwidth]{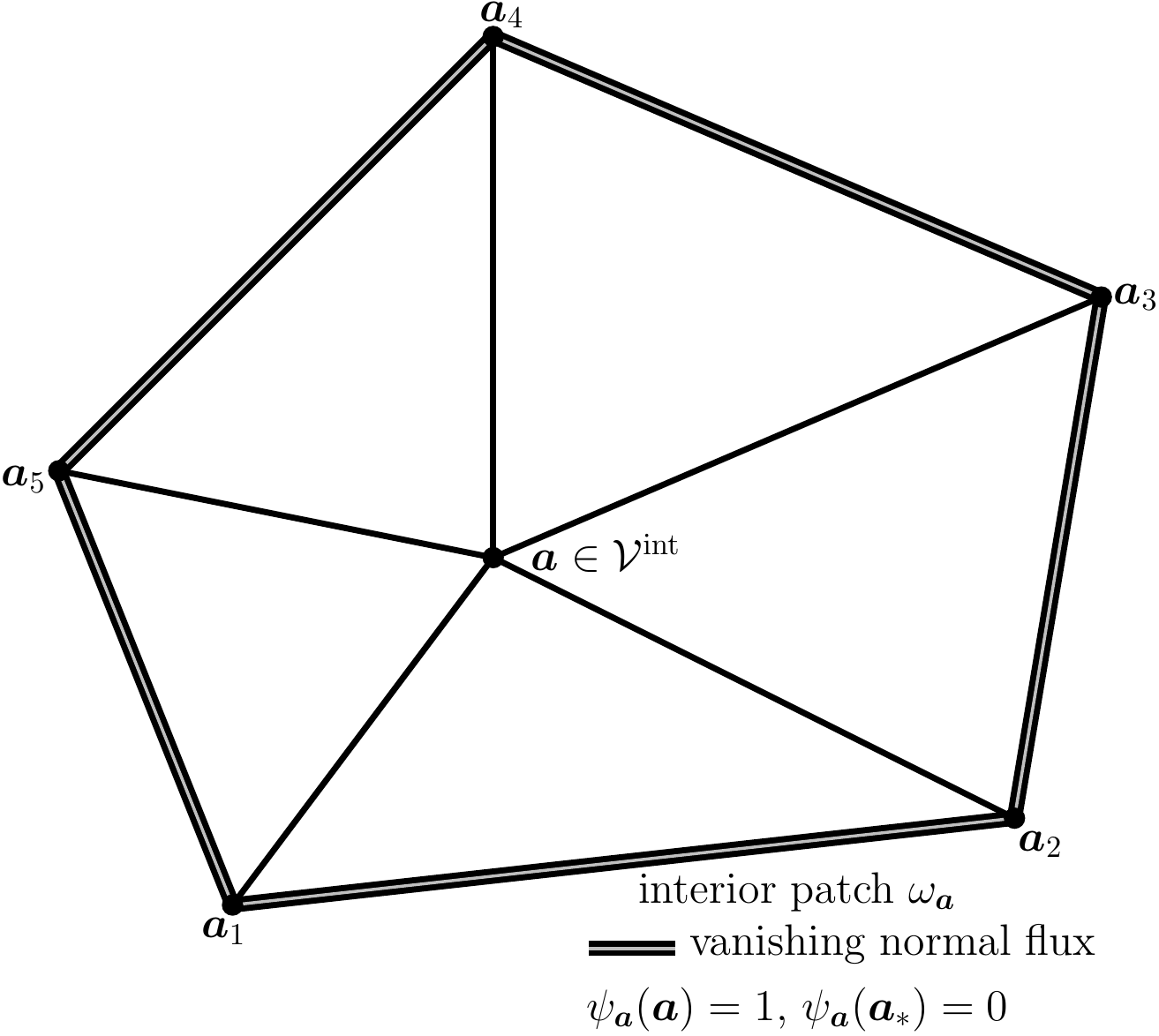} \quad \includegraphics[width=0.4\textwidth]{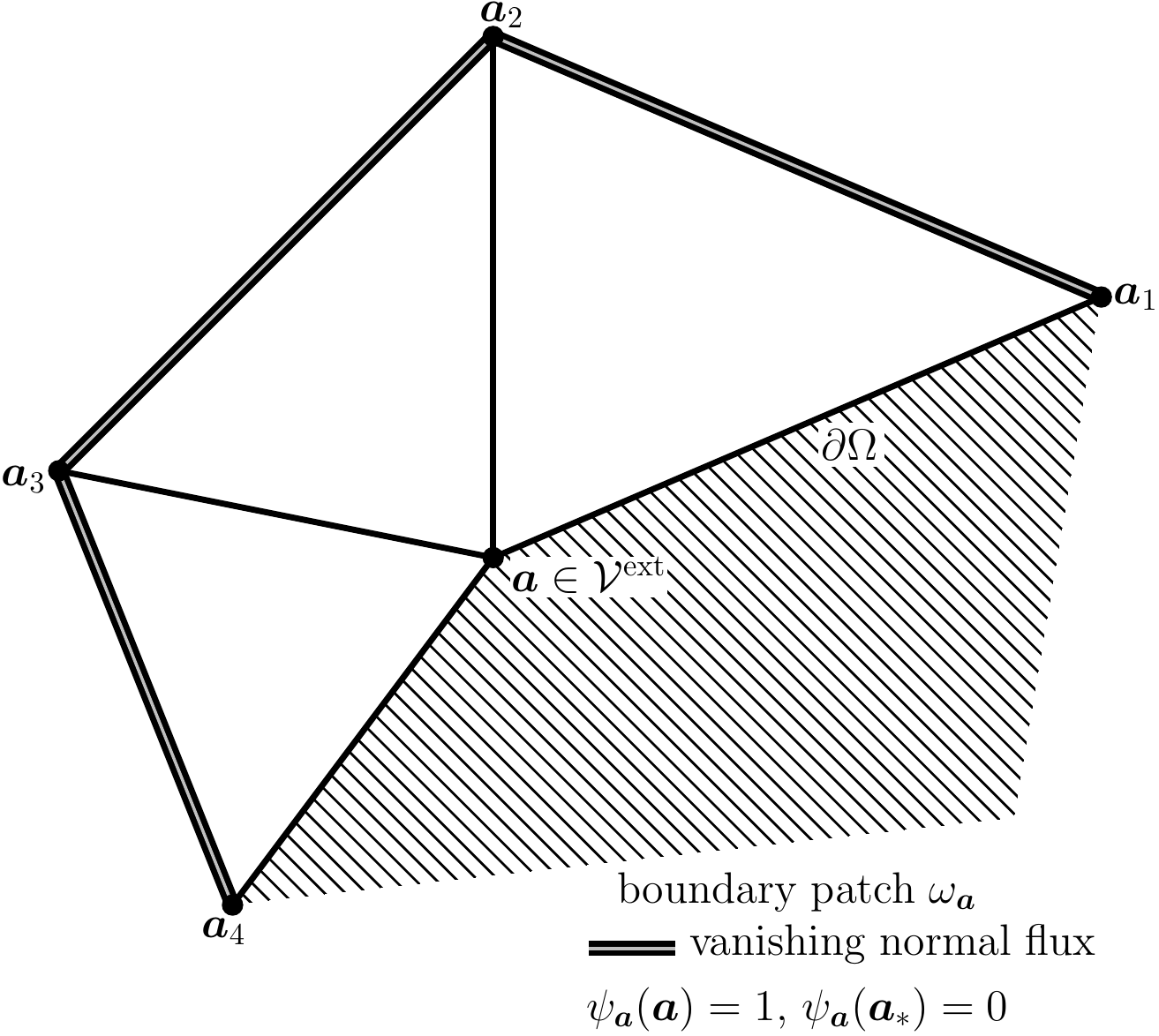}}
\caption{Patches $\Ta$, vanishing normal flux conditions in the local Raviart--Thomas--N\'ed\'elec spaces $\Va$, and hat functions $\psia$: interior (left) and boundary (right) vertex $\ver \in \calV$}
\label{fig:patches}
\end{figure}

Recall that $\uh \in \Vh$ with $\Vh = \PP_p(\Th)\cap \Hoo$ is the finite element solution given by~\eqref{eq:RD_fem}. Let $C_*$ be the constant composed of the constants of the trace and inverse inequalities and given by~\eqref{eq_C_st}. Our construction is:

\bd[Flux $\sh$ and potential $\phih$] \label{def:recs} For each vertex $\ver\in \calV$, let $(\sha,\phia) \in \Va\times \Qa$ be defined by the local constrained minimization problem
\bse \begin{equation}\label{eq:sha_def}
(\sha,\phia) \eq \argmin_{\substack{(\bvh,q_\idx) \in \Va\times \Qa \\ \Dv\bvh + \kappa^2 q_\idx  = \Pih(f \psia)- \eps^2 \nabla \uh \scp \nabla \psia}} \woma^2 \norm{\eps\psia \nabla \uh + \eps^{-1}\bvh}_{\oma}^2 +  \norm{ \kappa\left[\Pih(\psia \uh) - q_\idx\right]}^2_{\oma}
\end{equation}
with the weight
\be \label{eq:weight}
    \woma \eq  \min\left\{1, C_* \sqrt{\frac{\eps}{\kappa h_{\oma}}}\right\}.
\ee
Then, extending each $\sha$ and $\phia$ by zero outside of the patch $\oma$, $\sh\in \RTNp$ and $\phih\in \PP_p(\Th)$ are given by
\begin{equation}\label{eq:sh_def}
\sh \eq \sum_{\ver\in\calV} \sha,   \quad \phih \eq \sum_{\ver\in\calV} \phia.
\end{equation} \ese
\ed

We remark that for an interior vertex $\ver\in\Vint$, we have
\be \label{eq_Neu_comp}
    (\Pih(f \psia)- \eps^2 \nabla \uh \scp \nabla \psia,1)_{\oma} =  (f,\psia)_\oma - \eps^2 (\nabla \uh, \nabla \psia)_{\oma} =  \kappa^2 (\uh,\psia)_{\oma}
\ee
by Galerkin orthogonality with $\psia \in \Vh$ as a test function in~\eqref{eq:RD_fem}. Since
\[
    (\Dv\sha,1)_\oma = (\sha \scp \tn_\oma,1)_{\pt \oma} = 0
\]
by Green's theorem and the vanishing normal flux condition imposed in the definition~\eqref{eq_spaces_V} of $\Va$, it follows that $\phia$ necessarily satisfies the mean-value property
\[
    (\phia,1)_{\oma}=(\psia \uh,1)_{\oma} \qquad \forall \ver\in\Vint,
\]
 whenever $\kappa>0$.
If $\kappa = 0$ instead, then $\phia$ is undefined by~\eqref{eq:sha_def} but one remarks that it is no longer needed anywhere in the paper. In this case, Definition~\ref{def:recs} coincides with~\cite[equation~(9)]{Brae_Pill_Sch_p_rob_09}, \cite[Definition~6.9]{Ern_Voh_adpt_IN_13}, or~\cite[Construction~3.4]{Ern_Voh_p_rob_15}; in particular, the Neumann compatibility condition of problem~\eqref{eq:sha_def} for $\ver\in\Vint$ follows from~\eqref{eq_Neu_comp}.

In practice, the constrained minimization problem~\eqref{eq:sha_def} is solved through its Euler--Lagrange equations, which can be reduced to solving a linear system of dimension $\Dim \Va + \Dim \Qa$ in the present context. This problem reads: find $(\sha,\phia) \in \Va\times \Qa$ with $\phia = \ghia + \Pih(\psia \uh)$ and $(\sha,\ghia) \in \Va \times \Qa$ such that
\bse \label{eq_fl_equil} \bat{2}
    \eps^{-2} \woma^2 (\sha, \bv_\idx)_\oma - (\ghia, \Dv \bv_\idx)_\oma & = - \woma^2 (\psia  \Gr \uh, \bv_\idx)_\oma
    & \qquad & \forall \bv_\idx \in \Va, \label{eq_fl_equil_1}\\
    (\Dv \sha,q_\idx)_\oma + \kappa^2 (\ghia, q_\idx)_\oma & = (f \psia - \kappa^2 \psia \uh - \eps^2 \Gr \uh \scp \Gr \psia, q_\idx)_\oma &
    \qquad &\forall q_\idx \in \Qa. \label{eq_fl_equil_2}
\eat\ese

\subsection{Properties of $\sh$ and $\phih$}

We have constructed $\sh$ and $\phih$ such that the following holds:

\begin{proposition}[$\Hdv$-conformity of $\sh$, equilibration] \label{prop:equilibration}
Let $\sh \in \RTNp$ and $\phih\in \PP_p(\Th)$ be given by Definition~\ref{def:recs}. Then $\sh$ belongs to $\Hdv$, and $\sh$ and $\phih$ satisfy the equilibration property~\eqref{eq:equilibration}.
\end{proposition}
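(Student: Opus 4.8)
The plan is to verify the two claimed properties of $\sh$ separately: first its $\Hdv$-conformity, and then the equilibration identity $\Dv\sh + \kappa^2\phih = \Pih f$. Both follow from assembling the local contributions $\sha$ and $\phia$ and using the summation property $\sum_{\ver\in\calV}\psia = 1$.

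For the $\Hdv$-conformity, I would argue that each local flux $\sha$ lies in $\Hdiva\cap\RTNa$ with vanishing normal component on (part of) $\p\oma$, as imposed by the definition~\eqref{eq_spaces_V} of $\Va$. The key point is that extending $\sha$ by zero outside $\oma$ produces a function in $\Hdvo$: since $\sha\scp\bn = 0$ on $\p\oma$ (for interior vertices) or on $\p\oma\setminus\p\Om$ (for boundary vertices), the normal trace of the zero-extension matches continuously across $\p\oma$, so no spurious distributional divergence appears on those interfaces. As $\sh = \sum_{\ver\in\calV}\sha$ is a finite sum of such $\Hdv$-functions, it too belongs to $\Hdvo$.

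For the equilibration identity, I would work locally. By construction~\eqref{eq:sha_def}, each pair $(\sha,\phia)$ satisfies the constraint $\Dv\sha + \kappa^2\phia = \Pih(f\psia) - \eps^2\nabla\uh\scp\nabla\psia$ on $\oma$. Summing this over all vertices $\ver\in\calV$ and using $\sum_\ver\psia \equiv 1$, hence $\sum_\ver\nabla\psia \equiv 0$, gives
\[
\Dv\sh + \kappa^2\phih = \sum_{\ver\in\calV}\Pih(f\psia) - \eps^2\nabla\uh\scp\sum_{\ver\in\calV}\nabla\psia = \Pih\Big(f\sum_{\ver\in\calV}\psia\Big) = \Pih f,
\]
where the linearity of $\Pih$ lets me pull the sum inside the projection. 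Here I should note that the zero-extension of $\sha$ is consistent with reading the local divergence constraint as an identity on all of $\Om$, precisely because $\sha$ vanishes (together with its divergence) outside $\oma$.

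The main subtlety to address carefully is the treatment of the $\kappa = 0$ case together with the interior-vertex mean-value constraint on $\Qa$. When $\kappa>0$, the argument above is clean since $\phia$ is well-defined everywhere. When $\kappa=0$, the remark following Definition~\ref{def:recs} already observes that $\phih$ is not needed, and the local constraint reduces to $\Dv\sha = \Pih(f\psia) - \eps^2\nabla\uh\scp\nabla\psia$, so the summation still yields $\Dv\sh = \Pih f$. The only genuine verification is that the local constraints in~\eqref{eq:sha_def} are \emph{solvable}, i.e.\ that the Neumann compatibility condition holds for interior vertices; this is exactly~\eqref{eq_Neu_comp}, which follows from Galerkin orthogonality~\eqref{eq:RD_fem} tested against $\psia\in\Vh$. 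I would therefore expect the divergence-conformity step to be the place requiring the most care, since it is where the vanishing-normal-flux boundary conditions in the definition of $\Va$ must be invoked explicitly to rule out a singular divergence across patch boundaries.
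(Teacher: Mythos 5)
Your proof is correct and takes essentially the same route as the paper's: the zero extension of each $\sha$ is $\Hdv$-conforming thanks to the vanishing normal flux conditions built into $\Va$, and summing the local divergence constraints over all vertices with the partition of unity $\sum_{\ver\in\calV}\psia = 1$ (hence $\sum_{\ver\in\calV}\nabla\psia = 0$) and the linearity of $\Pih$ gives the equilibration identity. The extra points you verify---solvability via the Neumann compatibility condition~\eqref{eq_Neu_comp} and the $\kappa=0$ case---are handled in the remark following Definition~\ref{def:recs} rather than in the proof itself, and do not alter the argument.
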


\begin{proof}
First, the $\Hdv$-conformity of $\sh$ follows from the fact that, for any vertex $\ver\in\calV$, the zero extension of $\sha$ belongs to $\Hdv$ as a result of the vanishing normal flux boundary conditions in the space $\Va$.
Then, to show~\eqref{eq:equilibration}, we employ the constraint in~\eqref{eq:sha_def} together with~\eqref{eq:sh_def}:
\[
    \Dv \sh + \kappa^2 \phih = \sum_{\ver\in\calV} [ \Dv \sha+\kappa^2\phia] = \sum_{\ver\in\calV} \big[\Pih (f \psia) - \eps^2 \nabla \uh \scp \nabla \psia \big] = \Pih f,
\]
where we have used the fact that the hat functions $\{\psia\}_{\ver\in \calV} $ form a partition of unity over $\Om$, i.e.\ $\sum_{\ver\in\calV} \psia = 1$.
\end{proof}

\section{A computable guaranteed a posteriori error estimate} \label{sec:main_estimate}

This section presents our guaranteed and fully computable a posteriori error estimate. The following upper bound on the energy norm of the error builds on~\cite[Theorems~3.1 and 4.4]{Ched_Fuc_Priet_Voh_guar_rob_FE_RD_09} and \cite[Lemma~2]{Ainsw_Vej_guar_rob_RD_14}. It employs additionally  the concept of a potential reconstruction $\phih$ that will turn out crucial for a simple and robust flux equilibration. Moreover, it relies on the trace and inverse inequalities of Section~\ref{sec_ineqs} to make appear the crucial weighs (cut-off factors), with the constant $C_*$ given by~\eqref{eq_C_st}.

\begin{theorem}[Guaranteed a posteriori error estimate]
\label{thm:upper_bound}
Let $u$ be the weak solution of problem~\eqref{eq_RD} given by~\eqref{eq:RD_weak} and let $\uh \in \Vh $ be its finite element approximation given by~\eqref{eq:RD_fem}.
Let $\sh \in \RTNp \cap \Hdv$ and $\phih \in \PP_p(\Th)$ be given by Definition~\ref{def:recs}.
Then the following upper bound for the energy norm of the error holds:
\begin{equation} \label{eq:upper_bound}
\NORM{u-\uh}^2 \leq \sum_{\elm \in \Th} \left[ \wk \norm{\eps \nabla  \uh + \eps^{-1} \sh}_\elm + \norm{\kappa\left(\uh-\phih\right)}_\elm  + \wkt \norm{f-\Pih f}_\elm \right]^2,
\end{equation}
where the weights $\wk$ and $\wkt$ are respectively defined by
\begin{equation} \label{eq_weights}
\wk \eq \min\left\{1, C_* \sqrt{\frac{\eps}{\kappa h_{\elm}}} \right\}, \quad
\wkt \eq \min\left\{\frac{h_{\elm}}{\pi\eps}, \frac{1}{\kappa} \right\}, \qquad \elm\in\Th.
\end{equation}
\end{theorem}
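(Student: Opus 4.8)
The plan is to show that $a(u-\uh, v) \le \eta\,\NORM{v}$ for every $v \in \Hoo$, where $\eta^2$ denotes the right-hand side of \eqref{eq:upper_bound}, and then to set $v = u-\uh$; since $\NORM{u-\uh}^2 = a(u-\uh, u-\uh)$, this gives $\NORM{u-\uh} \le \eta$ at once. Writing $e \eq u-\uh$, the first step is a computable residual representation of $a(e,v)$. Using the weak formulation \eqref{eq:RD_weak} and the definition \eqref{eq:a} of $a$, then the $\Hdv$-conformity of $\sh$ together with $v|_{\pt\Om}=0$ (whence $(\sh,\nabla v) = -(\Dv\sh, v)$ by Green's theorem), and finally the equilibration property \eqref{eq:equilibration} written as $\Dv\sh = \Pih f - \kappa^2\phih$, I would arrive at
\begin{equation*}
a(e,v) = (f-\Pih f, v) - (\eps^2\nabla\uh + \sh, \nabla v) + \kappa^2(\phih - \uh, v).
\end{equation*}
The three terms are then estimated elementwise and combined by the Cauchy--Schwarz inequality over $\Th$, using $\sum_{\elm\in\Th}\NORM{v}_\elm^2 = \NORM{v}^2$.

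The reaction term is immediate: the elementwise Cauchy--Schwarz inequality and $\kappa\norm{v}_\elm \le \NORM{v}_\elm$ give $\kappa^2(\phih-\uh, v)_\elm \le \norm{\kappa(\uh-\phih)}_\elm\NORM{v}_\elm$, the middle contribution with weight one. For the oscillation term I would use that $\Pih$ is the $L^2$-projection onto $\PP_p(\Th)$, so $(f-\Pih f, \Pih v)_\elm = 0$ and thus $(f-\Pih f, v)_\elm = (f-\Pih f, v-\Pih v)_\elm$. Estimating $\norm{v-\Pih v}_\elm$ on one hand by the Poincar\'e inequality $\norm{v-\Pih v}_\elm \le (h_\elm/\pi)\norm{\nabla v}_\elm$ and on the other by the contraction bound $\norm{v-\Pih v}_\elm \le \norm{v}_\elm$, then invoking $\eps\norm{\nabla v}_\elm \le \NORM{v}_\elm$ and $\kappa\norm{v}_\elm \le \NORM{v}_\elm$, gives two competing estimates whose minimum is $\wkt\norm{f-\Pih f}_\elm\NORM{v}_\elm$.

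The crux is the flux term, where the weight $\wk$ must appear. With $\btau \eq \eps\nabla\uh + \eps^{-1}\sh \in \RTNK$, the naive bound $(\eps^2\nabla\uh + \sh, \nabla v)_\elm = \eps(\btau,\nabla v)_\elm \le \norm{\btau}_\elm\,\eps\norm{\nabla v}_\elm \le \norm{\btau}_\elm\NORM{v}_\elm$ gives weight one and handles the diffusion-dominated regime. For the sharper reaction-dominated weight I would subtract the mean value $\overline v$ of $v$ on $\elm$ (permissible since $\nabla\overline v = 0$) and integrate by parts, $\eps(\btau,\nabla v)_\elm = -\eps(\Dv\btau, v-\overline v)_\elm + \eps(\btau\cdot\bn, v-\overline v)_{\pt\elm}$. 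The inverse inequalities \eqref{eq:inverse_inequality} bound $\norm{\Dv\btau}_\elm$ and $\norm{\btau\cdot\bn}_{\pt\elm}$ by $\norm{\btau}_\elm$, while the Poincar\'e inequality in its geometric-mean form $\norm{v-\overline v}_\elm \le (h_\elm/\pi)^{1/2}\norm{\nabla v}_\elm^{1/2}\norm{v}_\elm^{1/2}$ and the trace inequality \eqref{eq:trace_inequality} bound $\norm{v-\overline v}_\elm$ and $\norm{v-\overline v}_{\pt\elm}$. Both contributions then share the factor $\eps h_\elm^{-1/2}\norm{\btau}_\elm\norm{\nabla v}_\elm^{1/2}\norm{v}_\elm^{1/2}$, with combined constant $\CpdK/\sqrt\pi + \Ctr\Cpdp$.

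The step requiring the most care, and the main obstacle, is the final rescaling that must reproduce both the exponent $\tfrac12$ on $\eps/(\kappa h_\elm)$ and the exact constant $C_*$ of \eqref{eq_C_st}. I would write $\eps\norm{\nabla v}_\elm^{1/2}\norm{v}_\elm^{1/2} = \sqrt{\eps/\kappa}\,\bigl[(\eps\norm{\nabla v}_\elm)(\kappa\norm{v}_\elm)\bigr]^{1/2}$ and apply the arithmetic--geometric mean inequality to the bracket, obtaining $\le \tfrac{1}{\sqrt2}\sqrt{\eps/\kappa}\,\NORM{v}_\elm$; the resulting factor $1/\sqrt2$ is precisely what turns $\CpdK/\sqrt\pi + \Ctr\Cpdp$ into $\sqrt2\,C_*$, hence into $C_*$. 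This yields $\eps(\btau,\nabla v)_\elm \le C_*\sqrt{\eps/(\kappa h_\elm)}\,\norm{\btau}_\elm\NORM{v}_\elm$, and taking the minimum with the weight-one bound gives $\wk$ (the case $\kappa = 0$ being covered by the weight-one branches). Summing the three elementwise estimates, applying Cauchy--Schwarz over $\Th$, and setting $v = e$ then completes the proof.
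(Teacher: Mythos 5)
Your proposal is correct and follows essentially the same path as the paper's proof: the same residual representation via Green's theorem and the equilibration identity \eqref{eq:equilibration}, the same elementwise treatment of the three terms, and for the flux term the same elementwise integration by parts against $v-\overline{v}_\elm$ combined with the trace and inverse inequalities, the Poincar\'e inequality, and the Young-type step $\eps\norm{\nabla v}_\elm^{1/2}\norm{v}_\elm^{1/2}\leq \tfrac{1}{\sqrt{2}}\sqrt{\eps/\kappa}\,\NORM{v}_\elm$, reproducing the constant $C_*$ exactly. The only (immaterial) differences are that you test directly with $v=u-\uh$ instead of invoking the dual-norm identity \eqref{eq:dual_norm_residual} — equivalent here since $a$ is symmetric — and that you bound the second branch of the oscillation term via the contraction $\norm{v-\Pih v}_\elm\leq\norm{v}_\elm$ rather than plain Cauchy--Schwarz.
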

\begin{proof}
First, we note that the energy norm of the error $\NORM{u-\uh}$ is related to the residual $\calR(\uh)\in H^{-1}(\Om)$, defined by
\[
    \pair{\calR(\uh)}{v} \eq (f,v)-a(\uh,v), \qquad v\in \Hoo,
\]
through the identity
\begin{equation}\label{eq:dual_norm_residual}
\NORM{u-\uh} = \NORM{R(\uh)}_{*}, \quad \NORM{R(\uh)}_{*} \eq \sup_{v\in \Hoo,\,\NORM{v}=1} \pair{\calR(\uh)}{v},
\end{equation}
\cf, \eg, \cite[equation~(4.1)]{Verf_RD_rob_a_post_98}.
Consider now $\pair{\calR(\uh)}{v}$ for a fixed function $v \in \Hoo$.
Since $\sh \in \Hdv$ and $v \in \Hoo$, Green's theorem gives $(\sh,\nabla v) + (\Dv \sh,v) = 0$, so
\begin{equation}\label{eq:residual_equilibrated}
\pair{\calR(\uh)}{v} = (f,v)-a(\uh,v) = (f - \Pih f, v) + (\kappa (\phih - \uh), \kappa v) - (\eps \nabla \uh + \eps^{-1} \sh, \eps\nabla v),
\end{equation}
where we have also used the equilibration identity~\eqref{eq:equilibration}.
We now proceed by estimating each term in~\eqref{eq:residual_equilibrated} elementwise.

For each element $\elm\in \Th$, we use the identity $(f - \Pih f, v)_\elm = (f - \Pih f, v-\Pih v)_\elm$ and the Poincar\'e--Friedrichs inequality on the convex element $\elm$, \ie\ $\norm{v-\Pih v}_\elm\leq \frac{h_\elm}{\pi}\norm{\nabla v}_\elm$ for any $v\in H^1(\elm)$, together with the energy error definition~\eqref{eq_en_el}, to obtain the following bound
\begin{equation}\label{eq:data_osc_bound}
\abs{(f - \Pih f, v)_\elm } \leq \norm{f-\Pi_\idx f}_\elm \min\left\{\frac{h_\elm}{\pi \eps} \norm{\eps\nabla v}_\elm, \frac{1}{\kappa} \norm{\kappa v}_\elm \right\}\leq \wkt  \norm{f-\Pi_\idx f}_\elm \NORM{v}_\elm.
\end{equation}
Here, actually, a little sharper bound is possible by a convex combination of the two possibilities, but we prefer to use the simple form~\eqref{eq:data_osc_bound} with $\wkt$ in the form of minimum given by~\eqref{eq_weights}.

Next, it is clear that
\be \label{eq_0}
    \abs{ (\eps \nabla \uh + \eps^{-1} \sh, \eps\nabla v)_{\elm} } \leq \norm{ \eps \nabla \uh + \eps^{-1}\sh}_\elm \NORM{v}_\elm
\ee
for each $\elm\in \Th$. However, this is not necessarily the sharpest possible estimate in the singularly perturbed regime $\kappa \gg \eps$. Therefore, following the idea of~\cite[Proof of Theorem~4.4]{Ched_Fuc_Priet_Voh_guar_rob_FE_RD_09}, we use Green's theorem elementwise together with the fact that $\nabla v = \nabla(v-\overline{v}_{\elm})$, where $\overline{v}_{\elm}$ denotes the mean-value of $v$ on $\elm$. This gives
\[
(\eps \nabla \uh + \eps^{-1} \sh, \eps\nabla v)_{\elm} =  ( (\eps \nabla \uh + \eps^{-1} \sh)\scp \bn , \eps (v-\overline{v}_{\elm}))_{\p \elm} - ( \Dv\left(\eps \nabla \uh + \eps^{-1} \sh\right), \eps (v-\overline{v}_{\elm}))_{\elm}.
\]
The $\Lti{\elm}$-stability of the mean-value,  $\norm{v-\overline{v}_{\elm}}_\elm \leq \norm{v}_\elm$, Young's inequality
\be \label{eq_You_en}
    \norm{\eps\nabla v}_{\elm}^\ft\norm{\kappa v}^\ft_\elm \leq \frac{1}{\sqrt{2}}\NORM{v}_{\elm},
\ee
and the multiplicative trace inequality~\eqref{eq:trace_inequality} altogether lead to
\[
    \eps \norm{v-\overline{v}_{\elm}}_{\p \elm} \leq \Ctr \eps^{\ft} \norm{\eps \nabla v}_{\elm}^\ft\norm{v}_{\elm}^\ft \leq \frac{ \Ctr}{\sqrt{2}} \sqrt{h_K}\sqrt{\frac{\eps}{\kappa h_{\elm}}} \NORM{v}_{\elm}.
\]
Combined with the inverse inequality~\eqref{eq:inverse_inequality}, we find that
\be \label{eq_1}
    \abs{( (\eps \nabla \uh + \eps^{-1} \sh)\scp \bn , \eps (v-\overline{v}_{\elm}))_{\p \elm}}
     \leq \frac{\Cpdp \Ctr}{\sqrt{2}}  \sqrt{\frac{\eps}{\kappa h_{\elm}}} \norm{\eps \nabla \uh + \eps^{-1} \sh}_{\elm} \NORM{v}_\elm.
\ee
The $\Lti{\elm}$-stability of the mean-value, the Poincar\'e--Friedrichs inequality in the form $\norm{v-\overline{v}_{\elm}}_\elm \leq \frac{h_\elm}{\pi}\norm{\nabla v}_\elm$, and~\eqref{eq_You_en} yield
\[
    \eps \norm{v-\overline{v}_{\elm}}_{\elm} \leq \eps \norm{v}_{\elm}^\ft
    h_\elm^\ft \pi^\mft \norm{\nabla v}_\elm^\ft \leq   \frac{h_\elm}{\sqrt{2\pi} } \sqrt{\frac{\eps}{\kappa h_{\elm}}} \NORM{v}_{\elm}.
\]
Thus, combined with the inverse inequality~\eqref{eq:inverse_inequality}, we find that
\be \label{eq_2}
\abs{( \Dv\left(\eps \nabla \uh + \eps^{-1} \sh\right), \eps (v-\overline{v}_{\elm}))_{\elm}} \leq \frac{1}{\sqrt{2}} \frac1{\sqrt{\pi}}\CpdK  \sqrt{\frac{\eps}{\kappa h_{\elm}}} \norm{\eps \nabla \uh + \eps^{-1} \sh}_{\elm} \NORM{v}_{\elm}.
\ee
Therefore, combining inequalities~\eqref{eq_0}, \eqref{eq_1}, and~\eqref{eq_2}, we get
\begin{equation}\label{eq:flux_bound_1}
\abs{(\eps \nabla \uh + \eps^{-1} \sh, \eps\nabla v)_{\elm}} \leq \wk \norm{\eps \nabla \uh + \eps^{-1} \sh}_{\elm} \NORM{v}_{\elm} \quad \forall \elm\in\Th
\end{equation}
with $\wk$ given by~\eqref{eq_weights} and $C_*$ given in \eqref{eq_C_st}. As a side remark, it is possible to obtain a slightly sharper, at the expense of making the weight $\wk$ more complicated than the simple form given by~\eqref{eq_weights}.

Finally, we can apply the Cauchy--Schwarz inequality to see that $|(\kappa(\phih - \uh),\kappa v)_{\elm}| \leq \norm{\kappa\left(\uh - \phih\right)}_{\elm}\NORM{v}_{\elm}$. Therefore, we deduce from~\eqref{eq:residual_equilibrated} and the above inequalities that
\[
\abs{\pair{\calR(\uh)}{v}}\leq \sum_{\elm\in\Th} \left[ \wk \norm{\eps \nabla \uh + \eps^{-1} \sh}_{\elm} + \norm{\kappa\left(\uh - \phih\right)}_{\elm} + \wkt \norm{f-\Pih f}_{\elm}\right] \NORM{v}_{\elm},
\]
which implies the upper bound on the error~\eqref{eq:upper_bound} after another Cauchy--Schwarz inequality, using~\eqref{eq:dual_norm_residual} and $\sum_{\elm \in \Th} \NORM{v}_\elm^2 = \NORM{v}^2$.
\end{proof}

\section{Efficiency and robustness of the estimate}

This section establishes the local (and consequently global) efficiency and robustness of our a posteriori error estimate.

\subsection{A basic stability result}

The main tool in the analysis of efficiency is the following stability result, where, we recall, the broken and the patchwise ${\bm{H}}(\dv)$-conforming Raviart--Thomas--N\'ed\'elec spaces $\RTN_p(\Ta)$ and $\Va$ are respectively given by~\eqref{eq_RTN_p} and~\eqref{eq_spaces}.

\begin{lemma}[Stability of patchwise flux equilibration] \label{lem:main_stability_bound}
Let a vertex $\ver\in\calV$ be fixed, and let $g_{\cyee \idx}\in \PP_p(\Ta)$ and $\btauh \in \RTN_p(\Ta)$ be given discontinuous piecewise polynomial functions, with the Neumann compatibility condition $(g_{\cyee \idx},1)_{\oma} =0$ satisfied if $\ver\in\Vint$. Then, there holds
\begin{equation}\label{eq:main_stability_bound}
\min_{\substack{\bvh \in \Va\\ \Dv\bvh = g_{\cyee \idx}}} \norm{\btauh + \bvh}_{\oma}
\lesssim  \sup_{\substack{v\in H^1_*(\oma)\\ \norm{\nabla v}_{\oma}=1}}
 \big\{(g_{\cyee \idx},v)_{\oma} - (\btauh,\nabla v)_{\oma} \big\},
\end{equation}
where $H^1_*(\oma)$ is the subspace of functions in $H^1(\oma)$ that have mean-value zero on the patch subdomain $\oma$ if $\ver\in\Vint$ is an interior vertex, or that vanish on $\partial\oma\cap\partial\Om$ if $\ver\in\Vext$ is a boundary vertex.
\end{lemma}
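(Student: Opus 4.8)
The plan is to read the right-hand side as the dual norm of the linear functional $\ell(v) \eq (g_\idx, v)_\oma - (\btauh, \Gr v)_\oma$ acting on $\Hsa$, and then to sandwich the discrete minimization on the left between this dual norm and its \emph{continuous} counterpart. Writing $\Gamma_\ver \eq \p\oma$ when $\ver\in\Vint$ and $\Gamma_\ver \eq \p\oma\setminus\p\Om$ when $\ver\in\Vext$, the continuous analogue of the left-hand side is
\[
   m_c \eq \min_{\substack{\bv\in\Hdiva,\ \bv\scp\bn = 0 \text{ on }\Gamma_\ver \\ \Dv\bv = g_\idx}} \norm{\btauh + \bv}_\oma .
\]
I would first prove that $m_c$ equals the right-hand side of~\eqref{eq:main_stability_bound}, and then prove that the discrete minimum is $\lesssim m_c$.

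\emph{Step 1 (continuous identity).} For any feasible $\bv$ and any $v\in\Hsa$, Green's theorem gives $(\bv,\Gr v)_\oma = -(g_\idx, v)_\oma$, since the boundary term $\langle \bv\scp\bn, v\rangle_{\p\oma}$ vanishes: on $\Gamma_\ver$ one has $\bv\scp\bn=0$, while on $\p\oma\cap\p\Om$ one has $v=0$ for boundary vertices (and $\Gamma_\ver=\p\oma$ for interior ones). Hence $(\btauh + \bv, \Gr v)_\oma = -\ell(v)$, and Cauchy--Schwarz yields $\abs{\ell(v)} \leq \norm{\btauh+\bv}_\oma\norm{\Gr v}_\oma$, so the right-hand side is $\le m_c$. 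For the reverse bound I would use the first-order optimality of the minimizer $\bsigma^\ast$: the residual $\btauh+\bsigma^\ast$ is $L^2(\oma)$-orthogonal to every divergence-free field in $\Hdiva$ with vanishing normal trace on $\Gamma_\ver$, so a Helmholtz decomposition on $\oma$ gives $\btauh+\bsigma^\ast = \Gr\phi^\ast$ with $\phi^\ast\in\Hsa$. Then $m_c^2 = \norm{\Gr\phi^\ast}_\oma^2 = \ell(-\phi^\ast) \le (\text{RHS})\,\norm{\Gr\phi^\ast}_\oma$, whence $m_c\le$ RHS. The hypothesis $(g_\idx,1)_\oma=0$ for $\ver\in\Vint$ is exactly what makes the feasible set nonempty.

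\emph{Step 2 (discrete stability, the crux).} Because $\Va \subset \{\bv\in\Hdiva : \bv\scp\bn=0 \text{ on }\Gamma_\ver\}$, every discrete admissible field is continuously admissible, so the discrete minimum is automatically $\ge m_c$; the content of the lemma is the reverse estimate discrete minimum $\lesssim m_c$. I would establish it by exhibiting a single $\bvh\in\Va$ with $\Dv\bvh = g_\idx$ and $\norm{\btauh+\bvh}_\oma \lesssim m_c$. This is equivalent to a divergence-preserving, $L^2$-stable projection of $\bsigma^\ast$ onto $\RTNa$ that respects the normal-flux condition on $\Gamma_\ver$, or equivalently to a divergence inf-sup condition for the pair $(\Va,\Qa)$ whose inverse inf-sup constant is controlled by $\sreg$, $d$, and $p$. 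Since the notation $\lesssim$ here is allowed to depend on $p$, a $p$-dependent commuting quasi-interpolation onto the patchwise Raviart--Thomas--N\'ed\'elec space suffices, obtained by a scaling argument from a fixed reference simplex together with the finiteness of the spaces.

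The main obstacle is precisely this construction. One cannot interpolate $\bsigma^\ast$ naively, as it lies only in $\Hdiva$, and the operator must simultaneously preserve the piecewise-polynomial divergence $g_\idx$ and the zero normal trace on $\Gamma_\ver$ across the \emph{whole} patch while keeping the $L^2(\oma)$-norm under control. Assembling such a stable lifting from element-local contributions, glued along interior faces using the partition of unity furnished by $\psia$, and verifying that the resulting field indeed lies in $\Va$ with the correct divergence, is where the technical effort concentrates; the $p$-robust version of this step is the one carried out in~\cite{Brae_Pill_Sch_p_rob_09,Ern_Voh_p_rob_15}, but for the present $p$-dependent bound a standard stability estimate is enough.
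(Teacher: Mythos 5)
You should first note what the paper actually does with this lemma: it gives \emph{no proof at all}. The statement is justified by citation, namely \cite[Theorem~7]{Brae_Pill_Sch_p_rob_09} for $d=2$ (where the constant is even $p$-robust) and \cite{Ern_Voh_p_rob_3D_18} for $d=3$, with the general-$d$ claim simply asserted. Measured against that, your Step~1 is a genuine addition and it is correct, in fact sharp: the feasible set is a closed affine subspace of $L^2(\oma)$ (nonempty for $\ver\in\Vint$ precisely thanks to $(g_\idx,1)_{\oma}=0$, as you say), Green's theorem kills the boundary terms with the stated conventions for $\Gamma_\ver$, and the optimality/Helmholtz argument correctly identifies the continuous minimum $m_c$ with the dual norm on the right-hand side as an \emph{equality}; the compatibility condition is also what lets the variational identity extend from mean-zero test functions to all of $H^1(\oma)$ so that the divergence constraint holds exactly. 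This duality reframing, together with the observation that the lemma's whole content is ``discrete minimum $\lesssim$ continuous minimum,'' is exactly the setting in which the cited results are proved, so up to this point you are on the paper's (implicit) route.

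The gap is in Step~2, and it is concrete: your proposed justification --- a ``scaling argument from a fixed reference simplex together with the finiteness of the spaces'' --- cannot work, because the patch $\oma$ is not the affine image of finitely many reference configurations. The number of simplices in the patch and their relative geometry vary over a continuum constrained only by $\sreg$, so uniformity of the constant over all shape-regular patches is the entire difficulty, not a finite-dimensional afterthought inherited from one reference element. Similarly, even granting an $L^2$-stable commuting quasi-interpolation $\Pi$ onto $\Va$, the needed bound $\norm{\btauh+\Pi\bv^*}_{\oma}\lesssim\norm{\btauh+\bv^*}_{\oma}$ requires controlling $(I-\Pi)\bv^*$ by $\norm{\btauh+\bv^*}_{\oma}$, and since $\btauh$ is only \emph{broken} Raviart--Thomas--N\'ed\'elec, $\Pi$ does not reproduce it, so the naive projection argument does not close; your partition-of-unity gluing would have to preserve simultaneously the piecewise polynomial divergence and the normal-trace condition on $\Gamma_\ver$, and making that precise is exactly the construction the cited works invest their effort in (explicit patchwise liftings in \cite{Brae_Pill_Sch_p_rob_09}, stable broken polynomial extensions in \cite{Ern_Voh_p_rob_3D_18}; see also \cite{Ern_Smears_Voh_H-1_lift_17}). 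A self-contained $p$-dependent proof is indeed available, but it runs through bubble-function liftings of element and face residuals or an element-by-element Fortin-type (divergence-preserving) lifting through the patch, not through reference-element finiteness. In short: you correctly isolate the crux and cite the right sources --- which puts you on par with the paper, since it too only cites --- but the decisive inequality in your write-up is asserted with an invalid mechanism rather than proved.
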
%

The above result holds for any dimension $\dim\geq 1$, although some additional properties are known for $\dim\leq 3$. Indeed, in the case where $\dim=2$, it is shown in~\cite[Theorem~7]{Brae_Pill_Sch_p_rob_09} that the constant in~\eqref{eq:main_stability_bound} is in fact independent of the polynomial degree $p$, \ie\ $p$-robust. The extension of the $p$-robustness of the bound to the case of $\dim=3$ was shown in \cite[Corollaries~3.3 and~3.6]{Ern_Voh_p_rob_3D_18}. It is also possible to extend similar results of this kind to situations with hanging nodes and locally refined submeshes, as shown in \cite{Ern_Smears_Voh_H-1_lift_17}.

\subsection{Stability with respect to residual estimators}

The next lemma shows that the local contributions of the equilibrated flux a posteriori estimators of Definition~\ref{def:recs} lie below the local residual estimators as defined in~\eqref{eq:residual_estimator}, with the element residuals $r_\idx$ and face residuals $j_\idx$ are defined by~\eqref{eq:res} and the weights $\ak$ and $\af$ defined by~\eqref{eq:residual_weights}.

\begin{lemma}[Stability of patchwise flux equilibration with respect to residual estimators]\label{lem:patch_residual_bound}
For each $\ver\in\calV$, let $\sha$ and $\phia$ be defined by~\eqref{eq:sha_def}. Then
\begin{equation}\label{eq:patch_residual_bound}
 \woma^2 \norm{\eps\psia \nabla \uh + \eps^{-1}\sha}_{\oma}^2 +  \norm{ \kappa\left[\Pih(\psia \uh) - \phia\right]}_{\oma}^2
  \lesssim \sum_{\elm\in \Ta} \ak^2 \norm{r_\idx}_{\elm}^2 + \sum_{F\in \calFa} \eps^{-1} \af \norm{j_\idx}_{F}^2.
\end{equation}
\end{lemma}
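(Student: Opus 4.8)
The plan is to combine the optimality of $(\sha,\phia)$ in the local minimization~\eqref{eq:sha_def} with the stability estimate of Lemma~\ref{lem:main_stability_bound}, choosing the competitor so that the element residual is distributed between the flux and the potential according to the dominant regime. Introducing the potential defect $s_\idx \eq \Pih(\psia\uh)-q_\idx$, the constraint in~\eqref{eq:sha_def} reads $\Dv\bvh = g_\idx + \kappa^2 s_\idx$ with $g_\idx \eq \Pih(f\psia)-\eps^2\nabla\uh\scp\nabla\psia-\kappa^2\Pih(\psia\uh)$, and the objective becomes $\woma^2\eps^{-2}\norm{\eps^2\psia\nabla\uh+\bvh}_\oma^2+\kappa^2\norm{s_\idx}_\oma^2$. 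For any admissible $s_\idx\in\Qa$, optimality then bounds the left-hand side of~\eqref{eq:patch_residual_bound} by
\[
\woma^2\eps^{-2}\Big(\min_{\substack{\bvh\in\Va\\ \Dv\bvh=g_\idx+\kappa^2 s_\idx}}\norm{\eps^2\psia\nabla\uh+\bvh}_\oma\Big)^2+\kappa^2\norm{s_\idx}_\oma^2,
\]
and I would estimate the first term by Lemma~\ref{lem:main_stability_bound} with $\btauh=\eps^2\psia\nabla\uh$ and right-hand side $g_\idx+\kappa^2 s_\idx$, whose Neumann compatibility for $\ver\in\Vint$ follows from~\eqref{eq_Neu_comp} as soon as $(\kappa^2 s_\idx,1)_\oma=0$.

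The key algebraic step is to rewrite the dual expression produced by Lemma~\ref{lem:main_stability_bound}. For $v\in\Hsa$, integrating $\eps^2(\psia\nabla\uh,\nabla v)_\oma$ by parts elementwise and noting that $\eps^2\nabla\uh\scp\nabla\psia$ and $\eps^2\psia\Delta\uh$ are polynomials of degree at most $p$, I expect the exact identity
\[
(g_\idx,v)_\oma-\eps^2(\psia\nabla\uh,\nabla v)_\oma=(\Pih(\psia r_\idx),v)_\oma+\sum_{F\in\calFa}(\psia j_\idx,v)_F,
\]
with $r_\idx$, $j_\idx$ the residuals of~\eqref{eq:res}; crucially, the projection $\Pih$ built into $g_\idx$ makes all data-oscillation terms cancel, consistent with the clean right-hand side of~\eqref{eq:patch_residual_bound}.

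I would then split according to the weight $\woma$ of~\eqref{eq:weight}. In the diffusion-dominated case $\woma=1$ (i.e.\ $\eps\gtrsim\kappa h_\oma$), I take $s_\idx=0$ and bound the identity by Cauchy--Schwarz, the Poincar\'e inequality $\norm{v}_\oma\lesssim h_\oma\norm{\nabla v}_\oma$ and the trace inequality $\norm{v}_F\lesssim h_\oma^{\ft}\norm{\nabla v}_\oma$ on $\Hsa$, giving a bound $\lesssim\eps^{-2}h_\oma^2\norm{r_\idx}_\oma^2+\eps^{-2}h_\oma\sum_F\norm{j_\idx}_F^2$, which matches $\ak^2\norm{r_\idx}_\oma^2+\eps^{-1}\af\sum_F\norm{j_\idx}_F^2$ since $\ak\simeq\af\simeq h_\oma/\eps$ here. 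In the reaction-dominated case $\woma<1$, I instead pick $s_\idx$ to absorb the element residual: $\kappa^2 s_\idx=-(\Pih(\psia r_\idx)-\overline{\psia r_\idx})$ for $\ver\in\Vint$ (subtracting the $\oma$-mean to enforce $(\kappa^2 s_\idx,1)_\oma=0$, using $(\psia r_\idx,1)_\oma=-\sum_F(\psia j_\idx,1)_F$ obtained from testing~\eqref{eq:RD_fem} with $\psia$), and $\kappa^2 s_\idx=-\Pih(\psia r_\idx)$ for $\ver\in\Vext$. This removes the element residual from the dual expression, leaving only $\sum_F(\psia j_\idx,v)_F$, so the flux contribution is $\lesssim\woma^2\eps^{-2}h_\oma\sum_F\norm{j_\idx}_F^2\simeq(\kappa\eps)^{-1}\sum_F\norm{j_\idx}_F^2$, while $\kappa^2\norm{s_\idx}_\oma^2\lesssim\kappa^{-2}\norm{r_\idx}_\oma^2$; both match the residual estimators with $\ak=\af=1/\kappa$.

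The decisive obstacle is precisely this reaction-dominated regime. Keeping the element residual inside the flux term would carry the weight $\woma^2\eps^{-2}h_\oma^2\simeq h_\oma/(\kappa\eps)$, which cannot be controlled by $\ak^2=\kappa^{-2}$ when $\eps\ll\kappa h_\oma$; this is exactly where the potential reconstruction is indispensable, since shifting the element residual into $s_\idx$ reweights it by $\kappa^{-2}$. Making that shift legitimate forces the mean-value correction above and relies on $\Pih(\psia r_\idx)\in\PP_p(\Ta)=\Qa$ together with the Neumann-compatibility structure of $\Va$. The crux is then to check that taking the \emph{better} of ``residual-in-flux'' and ``residual-in-potential'' across the two regimes reproduces exactly the cut-off factors $\ak,\af=\min\{h/\eps,1/\kappa\}$, that is, the minimum defining the weights.
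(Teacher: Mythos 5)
Your proposal is correct and follows essentially the same route as the paper's proof: you exploit the minimization property of~\eqref{eq:sha_def} to reduce to exhibiting a competitor, invoke Lemma~\ref{lem:main_stability_bound} after the same elementwise integration by parts, and split into the two regimes with exactly the paper's choices (your $s_\idx=0$ is the paper's $q_\idx^*=\Pih(\psia\uh)$ in the diffusion-dominated case, and your mean-corrected $\kappa^2 s_\idx=-(\Pih(\psia r_\idx)-\overline{\psia r_\idx})$ reproduces the paper's $q_\idx^*$ and $\rho_\ver$ in the reaction-dominated case). The only cosmetic differences --- parameterizing by the potential defect $s_\idx$ so the dual identity is derived once, and splitting on $\woma=1$ versus $\woma<1$ rather than on $\eps/h_{\oma}\lessgtr\kappa$ --- are equivalent up to constants depending only on $C_*$, $\sreg$, $d$, and $p$.
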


\begin{proof}
Let a vertex $\ver\in\calV$ be fixed. Since $\sha$ and $\phia$ are defined as minimizers of the functional in the right-hand side of~\eqref{eq:sha_def}, it is enough to prove that there always exists some $\bvh^*\in \Va$ and $q_\idx^*\in\Qa$ that satisfy the constraint $\Dv\bvh^* +\kappa^2 q_\idx^* = \Pih(f \psia)- \eps^2 \nabla \uh \scp \nabla \psia$ and that satisfy the bound~\eqref{eq:patch_residual_bound} with $\bvh^*$ in place of $\sha$ and $q_\idx^*$ in place of $\phia$. The specific construction depends on the mesh size and the problem parameters $\eps$ and $\kappa$, as we now show.

\emph{Case 1, $ \eps/h_{\oma} \leq \kappa$ (reaction dominance).} Up to a constant, we have $\kappa^{-1} \lesssim h_{\elm}/\eps$ and $\kappa^{-1} \lesssim h_{F}/\eps$ for all elements $\elm\in\Ta$ and all interior faces $F\in\calFa$.
In this case, we adopt the following construction.
Let
\[
    \rho_{\ver} \eq \frac{1}{\abs{\oma}}(\psia r_\idx,1)_{\oma} = \frac{1}{\abs{\oma}}(\psia (f+\eps^2 \Delta_\idx \uh - \kappa^2 \uh),1)_\oma, \qquad \ver\in\Vint,
\]
and $\rho_{\ver} \eq 0$ otherwise.
Next, we define
\begin{equation}
q_\idx^* \eq \frac{1}{\kappa^2} \left( \Pih \left(f \psia\right) + \eps^2 \psia \Delta_\idx \uh - \rho_{\ver} \right), \quad
\bvh^* \eq \argmin_{\substack{\bvh\in\Va \\ \Dv\bvh = g_{\cyee \idx}^* }} \norm{\eps^2 \psia \nabla \uh + \bvh}_{\oma},
\end{equation}
where
\[
    g_{\cyee \idx}^*\eq - \eps^2 (\nabla\uh\scp\nabla \psia + \psia \Delta_\idx \uh) + \rho_{\ver}.
\]
It is easy to check that if $\ver\in\Vint$, then $(g_{\cyee \idx}^{*},1)_{\oma} =0$, since the Galerkin orthogonality (take $\vh = \psia$ in~\eqref{eq:RD_fem}) implies that
\be\label{eq_GO_eff}
    (g_{\cyee \idx}^*,1)_{\oma}=(f,\psia)-\eps^2 (\nabla \uh,\nabla \psia)-\kappa^2(\uh,\psia) =0.
\ee
Therefore, it follows that $q_\idx^*\in \Qa $ and $\bvh^*\in \Va$ are well-defined and that they satisfy the constraint $\Dv \bvh^* + \kappa^2 q_\idx^* = \Pih(f \psia)- \eps^2 \nabla \uh \scp \nabla \psia$.

We now bound $\woma^2 \norm{\eps^2 \psia \nabla \uh + \bvh^*}_{\oma}^2$ and $\norm{\kappa\left[\Pih(\psia \uh)-q_\idx^*\right]}_{\oma}^2$.
First, we obtain
\[
\norm{\kappa\left[\Pih(\psia \uh)-q_\idx^*\right]}_{\oma}^2 = \frac{1}{\kappa^2}\norm{\Pih(\psia r_\idx)-\rho_{\ver}}_{\oma}^2 \leq \frac{1}{\kappa^2} \norm{r_\idx}_{\oma}^2 \lesssim \sum_{\elm\in\Ta} \ak^2 \norm{r_\idx}_{\elm}^2,
\]
where we have used the stability of the $L^2$-projection (note that $\rho_{\ver}$ is also the mean value of $\Pih(\psia r_\idx)$ on $\oma$ for $\ver \in \Vhint$) and the fact that $\norm{\psia}_{\infty, \oma}=1$ to bound $\norm{\Pih(\psia r_\idx)-\rho_{\ver}}_{\oma}$.
Next, we apply Lemma~\ref{lem:main_stability_bound} to bound $\woma^2 \norm{\eps^2 \psia \nabla \uh + \bvh^*}_{\oma}^2$. Note first that for an interior vertex $\ver \in \Vhint$, $(\rho_{\ver},v)_{\oma}=0$ since $v\in H^1_*(\oma)$ implies that $v$ is orthogonal to constant functions on $\oma$. We find that
\begin{equation} \label{eq_IPP_eff}
\begin{split}
\norm{\eps^2 \psia \nabla \uh + \bvh^*}_{\oma}
 &\lesssim \sup_{v\in H^1_*(\oma),\, \norm{\nabla v}_{\oma}=1} \big\{(g_{\cyee \idx}^*,v)_{\oma}-(\eps^2\nabla \uh,\psia\nabla v)_{\oma} \big\}
\\&=  \sup_{v\in H^1_*(\oma),\, \norm{\nabla v}_{\oma}=1} \big\{ -(\eps^2 \nabla \uh, \nabla(\psia v))_{\oma} - (\eps^2 \Delta_{\idx} \uh,\psia v)_{\oma} \big\}
\\&= \sup_{v\in H^1_*(\oma),\,  \norm{\nabla v}_{\oma}=1}  \sum_{F\in\calFa} (j_\idx,\psia v)_F,
\end{split}
\end{equation}
where the last line follows by elementwise integration by parts. It is then straightforward to deduce from the trace inequalities $\norm{v}_{F} \lesssim h_\elm^\mft \norm{v}_{\elm} + \norm{\nabla v}_{\elm}^\ft\norm{v}_{\elm}^\ft$ and the Poincar\'e--Friedrichs inequality for functions in $H^1_*(\oma)$ $\norm{v}_\oma \lesssim h_\oma \norm{\Gr v}_\oma$ that
\be \label{eq_jumps_eff}
    \norm{\eps^2 \psia \nabla \uh + \bvh^*}_{\oma}^2 \lesssim h_\oma \sum_{F\in\calFa} \norm{j_\idx}_F^2.
\ee
Consequently, using definition~\eqref{eq:weight} of the weight $\woma$
\[
\woma^2 \norm{\eps \psia \nabla \uh + \eps^{-1}\bvh^*}_{\oma}^2 \lesssim \frac{\eps}{\kappa h_{\oma}} \frac{h_{\oma}}{\eps^2} \sum_{F\in\calFa} \norm{j_\idx}_F^2 \lesssim  \sum_{F\in\calFa} \eps^{-1} \af \norm{j_\idx}_F^2.
\]
Therefore, if $\eps/h_{\oma} \leq  \kappa$, we have shown that there exists $\bvh^*$ and $q_\idx^*$ satisfying the constraint $\Dv \bvh^* + \kappa^2 q_\idx^* = \Pih(f \psia)- \eps^2 \nabla \uh \scp \nabla \psia$ and such that
\[
\woma^2 \norm{\eps \psia \nabla \uh + \eps^{-1}\bvh^*}_{\oma}^2 + \norm{\kappa\left[\Pih(\psia \uh)-q_\idx^*\right]}_{\oma}^2 \lesssim \sum_{\elm\in \Ta} \ak^2 \norm{r_\idx}_{\elm}^2 + \sum_{F\in \calFa} \eps^{-1}\af \norm{j_\idx}_{F}^2.
\]
As explained above, this implies~\eqref{eq:patch_residual_bound} in the case $\eps/h_{\oma} \leq  \kappa$.

\emph{Case 2, $\eps/h_{\oma} > \kappa$ (diffusion dominance).} We select
\[
    q_\idx^* \eq \Pih(\psia \uh), \quad
\bvh^* \eq \argmin_{\substack{\bvh\in\Va \\ \Dv\bvh = g_{\cyee \idx}^* }} \norm{\eps^2 \psia \nabla \uh + \bvh}_{\oma},
\]
where
\[
    g_{\cyee \idx}^* \eq \Pih(\psia(f-\kappa^2 \uh))-\eps^2 \nabla \psia \scp\nabla \uh.
\]
Notice that Galerkin orthogonality implies that $(g_{\cyee \idx}^*,1)_{\oma}=0$ if $\ver\in\Vint$ as in~\eqref{eq_GO_eff}, and also $\Dv\bvh^*+\kappa^2 q_\idx^* =\Pih(f \psia)- \eps^2 \nabla \uh \scp \nabla \psia$, so the requested constraint is satisfied.
It then follows directly from Lemma~\ref{lem:main_stability_bound} that
\[
 \norm{\eps^2 \psia \nabla \uh + \bvh^*}_{\oma}
 \lesssim \sup_{v\in H^1_*(\oma),\,\norm{\nabla v}_{\oma}=1} \left\{ (\Pih (\psia r_\idx),v)_{\oma} + \sum_{F\in\calFa} (\psia j_\idx,v)_F\right\},
\]
where we use the fact that elementwise integration by parts shows that, as in~\eqref{eq_IPP_eff},
\[
    (g_{\cyee \idx}^*,v)_{\oma}-(\eps^2\psia \nabla \uh,\nabla v)_{\oma} = (\Pih (\psia r_\idx),v)_{\oma} + \sum_{F\in\calFa} (\psia j_\idx,v)_F.
\]
Thus, proceeding as in~\eqref{eq_IPP_eff}--\eqref{eq_jumps_eff} for the face residuals term and using the stability of the $L^2$-projection, $\norm{\psia}_{\infty, \oma}=1$, and the Poincar\'e--Friedrichs inequality for functions in $H^1_*(\oma)$, $\norm{v}_\oma \lesssim h_\oma \norm{\Gr v}_\oma$, for the element residuals term, we get
\[
    \norm{\eps^2 \psia \nabla \uh + \bvh^*}_{\oma}^2 \lesssim h_\oma^2 \sum_{\elm\in \Ta} \norm{r_\idx}_{\elm}^2 + h_\oma \sum_{F\in\calFa} \norm{j_\idx}_F^2.
\]
Consequently,
\[
\norm{\eps \psia \nabla \uh + \eps^{-1}\bvh^*}_{\oma}^2 \lesssim  \sum_{\elm\in \Ta} \ak^2 \norm{r_\idx}_{\elm}^2 + \sum_{F\in\calFa} \eps^{-1}\af \norm{j_\idx}_F^2.
\]
Hence, on noting that $\woma\leq 1$ and that $\norm{\kappa\left[\Pih(\psia \uh)-q_\idx^*\right])}_{\oma}=0$, we see that~\eqref{eq:patch_residual_bound} also holds for the case $ \eps/h_{\oma} > \kappa$.
\end{proof}

Recall that $\frakT_\elm \eq \bigcup_{\ver\in\calV_\elm} \Ta$ and $\frakF_\elm \eq \bigcup_{\ver\in\calV_\elm} \calFa$.

\begin{proposition}[Bound on flux estimators by the residual estimators]\label{prop:residual_lower_bound}
Let $\sh$ and $\phih$ be given by Definition~\ref{def:recs}.
Additionally, let the volume and face residual functions $r_\idx$ and $j_\idx$ be defined by~\eqref{eq:res}.
Then, for each element $\elm\in\Th$, we have the bound
\begin{equation}\label{eq:residual_lower_bound}
\wk^2 \norm{\eps \nabla  \uh + \eps^{-1} \sh}_\elm^2 + \norm{\kappa\left(\uh-\phih\right)}_\elm^2 \lesssim \sum_{\elmt\in \frakT_\elm } \alpha_{\elmt}^2 \norm{r_\idx}_{\elmt}^2 + \sum_{F\in \frakF_\elm} \eps^{-1} \af \norm{j_\idx}_{F}^2.
\end{equation}
\end{proposition}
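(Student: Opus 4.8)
The plan is to derive the elementwise bound~\eqref{eq:residual_lower_bound} by localizing the global quantities $\sh$ and $\phih$ onto the patches surrounding the vertices of $\elm$, and then invoking the patchwise stability estimate of Lemma~\ref{lem:patch_residual_bound}. The starting observation is that, by Definition~\ref{def:recs} and since each $\sha$ and $\phia$ is extended by zero outside $\oma$, only the vertices $\ver\in\VK$ of $\elm$ contribute on $\elm$; that is, $\sh|_\elm = \sum_{\ver\in\VK}\sha|_\elm$ and $\phih|_\elm = \sum_{\ver\in\VK}\phia|_\elm$. Combined with the partition of unity $\sum_{\ver\in\VK}\psia = 1$ on $\elm$ and the identity $\Pih\uh = \uh$ (so that $\sum_{\ver\in\VK}\Pih(\psia\uh)=\uh$ on $\elm$, using $\uh|_\elm\in\PP_p(\elm)$), this yields the pointwise decompositions
\begin{equation*}
\eps\nabla\uh + \eps^{-1}\sh = \sum_{\ver\in\VK}\big(\eps\psia\nabla\uh + \eps^{-1}\sha\big), \qquad \kappa(\uh-\phih) = \sum_{\ver\in\VK}\kappa\big(\Pih(\psia\uh)-\phia\big)
\end{equation*}
on $\elm$.

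Next I would pass to norms. Applying the triangle inequality in $L^2(\elm)$, then enlarging each norm from $\elm$ to the patch $\oma\supset\elm$, and using that $\VK$ contains exactly $d+1$ vertices, gives
\begin{equation*}
\norm{\eps\nabla\uh + \eps^{-1}\sh}_\elm^2 \lesssim \sum_{\ver\in\VK}\norm{\eps\psia\nabla\uh + \eps^{-1}\sha}_\oma^2, \qquad \norm{\kappa(\uh-\phih)}_\elm^2 \lesssim \sum_{\ver\in\VK}\norm{\kappa(\Pih(\psia\uh)-\phia)}_\oma^2.
\end{equation*}
To handle the weight $\wk$ in front of the flux term, I would compare it with the patch weights $\woma$: since $\elm\subset\oma$ and shape-regularity ensures $h_\elm\le h_\oma\lesssim h_\elm$ for every $\ver\in\VK$, the definitions~\eqref{eq_weights} and~\eqref{eq:weight} give $\wk\simeq\woma$, and in particular $\wk\lesssim\woma$. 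Hence
\begin{equation*}
\wk^2\norm{\eps\nabla\uh + \eps^{-1}\sh}_\elm^2 \lesssim \sum_{\ver\in\VK}\woma^2\norm{\eps\psia\nabla\uh + \eps^{-1}\sha}_\oma^2.
\end{equation*}

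At this point each summand on the right is precisely the left-hand side of the patchwise estimate~\eqref{eq:patch_residual_bound}, so Lemma~\ref{lem:patch_residual_bound} bounds it by $\sum_{\elmt\in\Ta}\akp^2\norm{r_\idx}_{\elmt}^2 + \sum_{F\in\calFa}\eps^{-1}\af\norm{j_\idx}_F^2$. Summing over $\ver\in\VK$, recalling $\frakT_\elm = \bigcup_{\ver\in\VK}\Ta$ and $\frakF_\elm = \bigcup_{\ver\in\VK}\calFa$, and using the finite-overlap property that each element and each face appears in at most $d+1$ of the patches $\Ta$, $\ver\in\VK$, yields~\eqref{eq:residual_lower_bound}.

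The argument is essentially bookkeeping once Lemma~\ref{lem:patch_residual_bound} is available; the two points requiring care are the commuting of the projection $\Pih$ with the partition-of-unity decomposition of $\kappa(\uh-\phih)$, which relies on $\uh$ being a polynomial of degree at most $p$ on $\elm$, and the shape-regularity comparison $\wk\simeq\woma$. I expect the weight comparison to be the most delicate step, since it is exactly the place where the robustness-critical cut-off factor $\min\{1,C_*\sqrt{\eps/(\kappa h)}\}$ must be transferred uniformly in $\eps$ and $\kappa$ from the patch scale $h_\oma$ to the element scale $h_\elm$.
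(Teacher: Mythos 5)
Your proposal is correct and follows essentially the same route as the paper's proof: the vertex-patch decomposition $\sh|_\elm=\sum_{\ver\in\VK}\sha|_\elm$, $\phih|_\elm=\sum_{\ver\in\VK}\phia|_\elm$, the partition-of-unity identity $\uh|_\elm=\sum_{\ver\in\VK}\Pih(\psia\uh)|_\elm$ (valid since $\Pih$ is elementwise and $\uh|_\elm\in\PP_p(\elm)$), the shape-regularity comparison $\wk\lesssim\woma$, and the patchwise Lemma~\ref{lem:patch_residual_bound} applied to each summand. The only cosmetic differences are that you enlarge the norms from $\elm$ to $\oma$ and spell out the finite-overlap bookkeeping, whereas the paper keeps the intermediate norms on $\elm$; both are equivalent.
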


\begin{proof}
For each mesh element $\elm\in\Th$, we have $\sh|_{\elm} = \sum_{\ver\in\VK} \sha|_{\elm}$ and $\phih|_{\elm}=\sum_{\ver\in\VK}\phia|_{\elm}$. Furthermore, since $\{\psia\}_{\ver\in\VK}$ form a partition of unity over $\elm$ and since $\Pih$ is the elementwise $L^2$ projection of degree $p$, it follows that $\uh|_{\elm} = \Pih \uh|_{\elm} = \sum_{\ver\in\VK} \Pih\left( \psia \uh \right)|_{\elm}$.
Furthermore, \eqref{eq_weights} and~\eqref{eq:weight} together with the mesh shape regularity imply that $\wk \lesssim \woma$ for each $\ver\in\VK$, where the constant depends only on $\sreg$.
Therefore, we obtain
\ban
{} & \wk^2 \norm{\eps \nabla \uh + \eps^{-1}\sh}_{\elm}^2 + \norm{\kappa\left(\uh - \phih\right)}_{\elm}^2 \\
\lesssim {} & \sum_{\ver\in\VK}\left[ \woma^2 \norm{\eps \psia \nabla \uh + \eps^{-1}\sha}_{\elm}^2 + \norm{\kappa[\Pih(\psia \uh)-\phia]}_{\elm}^2 \right].
\ean
Therefore, we can use~\eqref{eq:patch_residual_bound} for each $\ver\in\VK$ to get~\eqref{eq:residual_lower_bound}.
\end{proof}

\subsection{Local efficiency and robustness of the estimate}

We now recall the well-known efficiency and robustness results for residual estimators, see~\cite[Proposition~4.1]{Verf_RD_rob_a_post_98} and \cite{Verf_13} for details. For each $\elm\in\Th$ and $F\in\calFint$, there holds
\begin{subequations}\label{eq:residual_efficiency}
\begin{align}
   \ak^2 \norm{r_\idx}_{\elm}^2  &\lesssim \NORM{u-\uh}_{\elm}^2 + \ak^2 \norm{f-\Pih f}_{\elm}^2,\\
   \eps^{-1}\af \norm{j_\idx}_F^2 & \lesssim \sum_{\elm\in\Th, F\subset \p \elm} \left[\NORM{u-\uh}_{\elm}^2 + \ak^2 \norm{f-\Pih f}_{\elm}^2 \right].
\end{align}
\end{subequations}

Therefore, the combination of Proposition~\ref{prop:residual_lower_bound} with~\eqref{eq:residual_efficiency} shows that the equilibrated flux estimator of Theorem~\ref{thm:upper_bound} is locally efficient and robust.
\begin{theorem}[Local efficiency and robustness]\label{thm:efficiency_robustness}
Let $u$ be the weak solution of problem~\eqref{eq_RD} given by~\eqref{eq:RD_weak} and let $\uh \in \Vh $ be its finite element approximation given by~\eqref{eq:RD_fem}.
Let $\sh \in \RTNp \cap \Hdv$ and $\phih \in \PP_p(\Th)$ be given by Definition~\ref{def:recs}. Then, for each mesh element $\elm\in\Th$, there holds
\begin{equation}\label{eq:robustness_efficiency_flux}
\wk^2 \norm{\eps \nabla \uh + \eps^{-1}\sh}_{\elm}^2 + \norm{\kappa\left(\uh - \phih\right)}_{\elm}^2
\lesssim \sum_{\elmt\in\frakT_\elm} \left[ \NORM{u-\uh}_{\elmt}^2 + \alpha_{\elmt}^2 \norm{f-\Pih f}_{\elmt}^2 \right],
\end{equation}
where the constant in $\lesssim$ depends only on the dimension $\dim$, the shape-regularity constant $\sreg$ of $\Th$, and on the polynomial degree $p$, so that it is independent of the parameters $\eps$ and $\kappa$ and the mesh-sizes $h_\elm$.
\end{theorem}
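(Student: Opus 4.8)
The strategy is to chain the two intermediate results already assembled, namely the bound of the flux estimators by the residual estimators (Proposition~\ref{prop:residual_lower_bound}) and the classical local efficiency of the residual estimators~\eqref{eq:residual_efficiency}. Since the left-hand side of~\eqref{eq:robustness_efficiency_flux} is identical to that of~\eqref{eq:residual_lower_bound}, the opening move is simply to invoke Proposition~\ref{prop:residual_lower_bound}, which dominates it by the weighted residual estimator summed over the element patch $\frakT_\elm$ and the face patch $\frakF_\elm$:
\[
\wk^2 \norm{\eps \nabla \uh + \eps^{-1}\sh}_\elm^2 + \norm{\kappa(\uh - \phih)}_\elm^2 \lesssim \sum_{\elmt\in\frakT_\elm} \alpha_{\elmt}^2 \norm{r_\idx}_{\elmt}^2 + \sum_{F\in\frakF_\elm} \eps^{-1}\af \norm{j_\idx}_F^2.
\]

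Next, I would insert the residual efficiency bounds~\eqref{eq:residual_efficiency} term by term: each volume contribution $\alpha_{\elmt}^2\norm{r_\idx}_{\elmt}^2$ is replaced by $\NORM{u-\uh}_{\elmt}^2 + \alpha_{\elmt}^2\norm{f-\Pih f}_{\elmt}^2$, while each face contribution $\eps^{-1}\af\norm{j_\idx}_F^2$ is replaced by the sum of the same quantity over the (at most two) elements $\elmt$ sharing the face $F$. This rewrites the right-hand side as a sum, over elements in a neighbourhood of $\elm$, of the target quantity $\NORM{u-\uh}_{\elmt}^2 + \alpha_{\elmt}^2\norm{f-\Pih f}_{\elmt}^2$.

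The final step is a finite-overlap counting argument. Every element produced on the right through the face-residual substitution shares a face with some $F\in\frakF_\elm$, hence contains a vertex $\ver\in\calV_\elm$ and therefore lies in one of the patches $\Ta$ defining $\frakT_\elm$; thus this enlarged collection is still contained in $\frakT_\elm$, and each element of $\frakT_\elm$ is counted a number of times bounded solely in terms of the shape regularity $\sreg$ and the dimension $\dim$. Absorbing this bounded multiplicity into the generic constant yields~\eqref{eq:robustness_efficiency_flux}, with a constant depending only on $\dim$, $\sreg$, and $p$, as claimed.

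I would expect no genuine obstacle, since both ingredients are already established; the only point requiring care is the bookkeeping in the last step, namely verifying that the elements generated by expanding the face-residual terms do not escape $\frakT_\elm$ and that their multiplicity is uniformly bounded. This is guaranteed by shape regularity, which caps both the cardinality of each vertex patch and the number of patches sharing any given element, so that all constants remain independent of $\eps$, $\kappa$, and the mesh sizes $h_\elm$.
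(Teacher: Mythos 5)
Your proposal is correct and coincides with the paper's own argument: the theorem is stated there as an immediate consequence of combining Proposition~\ref{prop:residual_lower_bound} with the residual efficiency bounds~\eqref{eq:residual_efficiency}, exactly as you do. Your final bookkeeping step, left implicit in the paper, is also verified correctly, since every face $F\in\frakF_\elm$ contains a vertex $\ver\in\calV_\elm$, so the elements sharing $F$ lie in $\Ta\subset\frakT_\elm$, with multiplicities bounded by shape regularity alone.
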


\section{Necessity of the weights $\wk$ in the upper bound}\label{sec_countr_ex}

Theorems~\ref{thm:upper_bound} and \ref{thm:efficiency_robustness} show that the estimator $\wk \norm{\eps \nabla \uh + \eps^{-1}\sh}_{\elm} + \norm{\kappa\left(\uh - \phih\right)}_{\elm}$ obtained from the flux equilibration of Definition~\ref{def:recs} is a reliable, locally efficient, and robust energy error estimator for singularly perturbed reaction--diffusion problems.
Here we show the {\em necessity} of the weight $\wk$ for robustness of equilibrated flux estimators that involve only piecewise polynomial vector fields on $\Th$. We also recall that an alternative option, related to the approach in~\cite{Ains_Bab_rel_rob_a_post_RD_99,Ainsw_Vej_guar_rob_RD_11,Ainsw_Vej_guar_rob_RD_14,Kopt_a_post_equi_fl_anis_RD_17}, is to perform an equilibrations on a submesh.

\subsection{Necessity of the weights $\wk$}

The following proposition applies to any flux equilibration on $\Th$:

\begin{proposition}[Best-possible bound by piecewise polynomials of the mesh $\Th$]\label{prop:flux_lower_bound}
Let $\uh \in \PP_p(\Th)\cap \Hoo$ be an arbitrary piecewise $p$-degree polynomial, $p \geq 1$, and let the face residual term $j_\idx$ be defined by~\eqref{eq:face_res}.
Let $\PP_{p^\prime}(\Th;\R^{\dim})$ denote the space of $\R^{\dim}$-valued piecewise polynomials of degree at most $p^\prime$ over $\Th$, where $p^\prime\geq 0$ is an arbitrary nonnegative integer.
Then,
\begin{equation}\label{eq:lower_bound_flux}
\inf_{\bvh \in \Hdv\cap \PP_{p^\prime}(\Th;\R^{\dim})  } \norm{ \eps \nabla \uh + \eps^{-1}\bvh} \gtrsim \sqrt{\frac{\kappa \underline{h}}{\eps}} \left( \sum_{F\in\calFint} \eps^{-1}\af \norm{j_\idx}_F^2 \right)^{\frac{1}{2}},
\end{equation}
where $\underline{h}\eq\min_{\elm\in\Th} h_K$, and where the constant depends only on the polynomial degrees $p$ and $p^\prime$, the dimension $\dim$, and the shape-regularity $\sreg$ of $\Th$.
\end{proposition}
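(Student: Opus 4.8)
The plan is to establish the lower bound \eqref{eq:lower_bound_flux} by showing that any piecewise polynomial $\bvh\in\Hdv\cap\PP_{p^\prime}(\Th;\R^{\dim})$ which makes $\norm{\eps\nabla\uh+\eps^{-1}\bvh}$ small must, through its $\Hdv$-conformity, control the jumps $j_\idx$ on interior faces, but that this control degrades by a factor $\sqrt{\kappa\underline{h}/\eps}$ because a polynomial flux cannot resolve the boundary-layer length scale. First I would fix an interior face $F$ shared by two elements $\elm,\elm^\prime$ and relate the face residual $j_\idx=-\eps^2\jump{\nabla\uh\scp\bn_F}_F$ to the quantity $\eps\nabla\uh+\eps^{-1}\bvh$: since $\bvh\in\Hdv$, its normal trace $\bvh\scp\bn_F$ is continuous across $F$, so the jump $\jump{(\eps\nabla\uh+\eps^{-1}\bvh)\scp\bn_F}_F=\eps\jump{\nabla\uh\scp\bn_F}_F=-\eps^{-1}j_\idx$. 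This identity is the crux: it exhibits $\eps^{-1}j_\idx$ as the jump across $F$ of the normal component of the very field $\bm{w}\eq\eps\nabla\uh+\eps^{-1}\bvh$ whose global norm appears on the left of \eqref{eq:lower_bound_flux}.

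Next I would convert this jump into a bound on $\norm{j_\idx}_F$ by the field values on the two adjacent elements. The field $\bm{w}$ is a piecewise polynomial of bounded degree, so on each of $\elm$ and $\elm^\prime$ a discrete trace (inverse) inequality of the form $\norm{\bm{w}\scp\bn_F}_F\lesssim h_\elm^{-1/2}\norm{\bm{w}}_\elm$ holds, with a constant depending only on $p$, $p^\prime$, $\dim$, and $\sreg$ (this is exactly the kind of inverse estimate recorded in \eqref{eq:inverse_inequality}). Combining with the jump identity gives
\[
\eps^{-1}\norm{j_\idx}_F \lesssim h_F^{-1/2}\left(\norm{\bm{w}}_\elm+\norm{\bm{w}}_{\elm^\prime}\right).
\]
Multiplying through, squaring, and summing over all interior faces $F\in\calFint$, and using that each element is adjacent to a bounded number of faces (shape regularity), I would obtain $\sum_{F}\eps^{-2}h_F\norm{j_\idx}_F^2\lesssim\sum_\elm\norm{\bm{w}}_\elm^2=\norm{\bm{w}}^2$.

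The remaining step is purely algebraic bookkeeping of the weights. Recalling $\af=\min\{h_F/\eps,1/\kappa\}$ from \eqref{eq:residual_weights}, I would verify that in the reaction-dominated regime the weighted residual sum $\sum_F\eps^{-1}\af\norm{j_\idx}_F^2$ differs from the unweighted $\sum_F\eps^{-2}h_F\norm{j_\idx}_F^2$ precisely by the factor $\eps/(\kappa\underline{h})$, since $\eps^{-1}\af\le\eps^{-1}\kappa^{-1}$ and $h_F\ge\underline{h}$ give $\eps^{-2}h_F\ge(\kappa\underline{h}/\eps)\,\eps^{-1}\af$. Rearranging yields $\frac{\kappa\underline{h}}{\eps}\sum_F\eps^{-1}\af\norm{j_\idx}_F^2\lesssim\norm{\bm{w}}^2$, and taking the infimum over admissible $\bvh$ and square roots produces exactly \eqref{eq:lower_bound_flux}. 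I expect the main obstacle to be pinning down the discrete trace inequality with a constant genuinely independent of the mesh size and parameters yet valid for the arbitrary degree $p^\prime$ of the competitor flux; one must be careful that the estimate uses only $\Hdv$-conformity (continuity of the normal trace) and \emph{not} any equilibration property of $\bvh$, so that the conclusion applies, as claimed, to \emph{any} piecewise-polynomial flux equilibration on $\Th$ regardless of how it is constructed.
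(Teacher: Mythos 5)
Your proposal is correct and follows essentially the same route as the paper's own proof: the jump identity $j_\T|_F=-\eps\jump{(\eps\nabla\uh+\eps^{-1}\bvh)\scp\bn_F}_F$ from the $\Hdv$-conformity of $\bvh$, the discrete trace (inverse) inequality applied elementwise to the piecewise polynomial field of degree $\max(p',p-1)$ (whence the constant's dependence on $p$ and $p'$), and the weight bookkeeping $\eps^{-1}\af\leq\eps^{-1}\kappa^{-1}$ together with $h_F\gtrsim\underline{h}$ (up to shape regularity) before summing over faces. The only cosmetic difference is that the paper performs the weight manipulation face by face in a single display rather than passing through the unweighted sum $\sum_F\eps^{-2}h_F\norm{j_\T}_F^2$, and your closing caution --- that the argument uses only normal-trace continuity and no equilibration property of $\bvh$ --- is exactly the point the paper emphasizes.
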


\begin{proof}
Let $\bvh\in\Hdv\cap \PP_{p^\prime}(\Th;\R^{\dim})$ be arbitrary. Then, for each interior face $F\in\calFint$, the $\Hdv$-conformity of $\bvh$ implies that $\jump{ \bvh \scp \bn_F }_F=0$, and hence $j_\idx|_F = - \eps \jump{ (\eps\nabla \uh + \eps^{-1}\bvh)\scp \bn_F }_F$. Since $(\eps\nabla \uh + \eps^{-1}\bvh)|_{K}\in \PP_{\max(p^\prime,p-1)}(\elm;\R^\dim)$ for each element $K\in\Th$, we can apply the triangle inequality and the inverse inequality (analogous to~\eqref{eq:inverse_inequality}) to find that, for any $F\in\calFint$,
\begin{equation}\label{eq:lower_bound_flux_2}
\eps^{-1} \af  \norm{j_\idx}^2_F \lesssim \frac{\eps}{\kappa \underline{h}}\sum_{\elm\in\Th,F\subset\p\elm} \norm{\eps \nabla \uh + \eps^{-1}\bvh}_{\elm}^2.
\end{equation}
Therefore, we get~\eqref{eq:lower_bound_flux} by summing~\eqref{eq:lower_bound_flux_2} over all faces $F\in\calFint$, and recalling that $\bvh$ was arbitrary.
\end{proof}

The upshot of Proposition~\ref{prop:flux_lower_bound} is that for any problem where the jump estimators are sufficiently dominant, \ie\ when
\begin{equation}\label{eq:jump_dominance}
\NORM{u-\uh} \simeq \left( \sum_{F\in\calFint} \eps^{-1}\af \norm{j_\idx}_F^2 \right)^{\frac{1}{2}},
\end{equation}
then \emph{any} error estimator involving a term of the form $\norm{ \eps \nabla \uh + \eps^{-1}\bvh}$ without any weight will necessarily be non-robust when $\kappa \underline{h} / \eps$ takes large values, since~\eqref{eq:lower_bound_flux} and~\eqref{eq:jump_dominance} then imply
\begin{equation}\label{eq:inefficiency}
\inf_{\bvh \in \Hdv\cap \PP_{p^\prime}(\Th;\R^{\dim})  } \frac{\norm{ \eps \nabla \uh + \eps^{-1}\bvh}}{\NORM{u-\uh}} \gtrsim \sqrt{\frac{\kappa \underline{h} }{\eps}}.
\end{equation}
In other words, the effectivity index can be become {\em arbitrarily large} in the singularly-perturbed regime when the weight $\wk$ is not included. It is then seen that the inclusion of the weight term $\wk$ in Theorem~\ref{thm:upper_bound} is {\em necessary} when considering flux equilibrations from vector-valued piecewise polynomial subspaces of $\Hdv$ on the mesh $\Th$, regardless of the precise details of the construction of the flux. Examples of flux equilibrations proposed in the past that cannot be robust in general include Repin and Sauter~\cite{Rep_Sau_funct_a_post_react_dif_06}, Ainsworth~\eal\ \cite{Ains_All_Bar_Rank_a_post_ADR_13}, Eigel and Samrowski~\cite{Eig_Sam_func_a_post_RD_14}, Eigel and Merdon~\cite{Eig_Mer_equil_ADR_16}, and~Vejchodsk\'y~\cite{Vej_compl_a_post_12,Vej_qual_equil_15,Vej_RD_ENUMATH_17}.

We now present an example of a situation where~\eqref{eq:jump_dominance} holds and where $\kappa \underline{h}/\eps$ can be arbitrarily large. In fact the example is similar to the one in~{\upshape\cite[Section~2.3]{Ains_Bab_rel_rob_a_post_RD_99}}, albeit with some suitable adjustments.

\begin{example}[Dominant jump estimators]
Let $\Om \eq (-1/2,1/2)$ and let $m$ be an odd integer that will later on be chosen sufficiently large. Consider a uniform mesh $\Th$ of $\Om$ with $2N = (m+1)^2$ intervals, $N \eq (m+1)^2/2$, and mesh size $h \eq 1 / (2N) = 1 / (m+1)^2$. Hence, the interior nodes are $x_i = ih$, where $i\in\{-N+1,\dots, N-1\}$.
Let
\[
    f \eq \calI_\idx \cos(m\pi x) \in \PP_1(\Th) \cap \Hoo
\]
denote the piecewise affine Lagrange interpolant (preserving the point values) of the function $x\mapsto \cos(m\pi x)$; it follows from the fact that $m$ is odd that $f\in \Hoo$. Note that in the example of {\upshape\cite{Ains_Bab_rel_rob_a_post_RD_99}}, the function $f$ was chosen as $\cos(\pi x)$ instead.

Consider now problem~\eqref{eq_RD} along with its finite element approximation~\eqref{eq:RD_fem} in the space $\Vh = \PP_1(\Th)\cap \Hoo$. It is easy to show that
\[
    \uh = (\eps^2 \mu_h + \kappa^2)^{-1} f
\]
is the discrete solution, where
\[
    \mu_h\eq \frac{6}{2+\cos(m \pi h)} \frac{1-\cos(m\pi h)}{h^2},
\]
as a result of the identity
\[
    \int_{-1/2}^{1/2} f^\prime \vh^\prime \,\dd x = \mu_h \int_{-1/2}^{1/2}
    f \vh\,\dd x \qquad \forall \vh\in \Vh.
\]
Then, noting that interior vertices and faces coincide for problems in one space dimension, it is found that
\[
r_\idx|_{\elm}= \frac{\eps^2 \mu_h}{\eps^2\mu_h + \kappa^2} f|_{\elm} ,\quad j_\idx|_{x_i}=-\eps^2\jump{ \uh^{\prime}(x_i) } =  \frac{\eps^2}{\eps^2\mu_h + \kappa^2} \frac{2(1-\cos(m\pi h))}{h} f(x_i).
\]

Now, since $\lim_{m\tends\infty} \frac{1-\cos(m\pi h)}{h} = \frac{\pi^2}{2}$ when $h=h(m)=1/(m+1)^2$, we can pick $m$ sufficiently large such that $\mu_h\simeq h^{-1}$. Suppose also henceforth that $\kappa h / \eps \geq 1$, so that $\alpha_\elm$ given by~\eqref{eq:residual_weights} takes the value $1/\kappa$. Then, we find that
\[
\sum_{K\in\Th} \ak^2 \norm{r_\idx}_{\elm}^2 = \frac{1}{\kappa^2}\left(\frac{\eps^2 }{\eps^2\mu_h + \kappa^2} \right)^2 \mu_h^2 \norm{f}^2 \simeq \left(\frac{\eps^2}{\eps^2\mu_h + \kappa^2} \right)^2 \frac{1}{\kappa^2 h^2}.
\]
We also obtain
\[
\sum_{F\in\calFint}\eps^{-1}\af \norm{j_\idx}_{F}^2 \simeq \left(\frac{\eps^2}{\eps^2\mu_h + \kappa^2}\right)^2 \frac{1}{\eps\kappa} \sum_{i=-N+1}^{N-1}\abs{f(x_i)}^2 \simeq  \left(\frac{\eps^2}{\eps^2\mu_h     + \kappa^2}\right)^2 \frac{1}{\eps\kappa h},
\]
where we have used the trigonometric identity
\[
    \sum_{i=-N+1}^{N-1}\abs{f(x_i)}^2 = \sum_{i=-N+1}^{N-1} \Big\lvert\cos\Big(\frac{m\pi i}{2N}\Big)\Big\rvert^2 =\sum_{i=-N+1}^{N-1}\Big\lvert\cos\Big(\frac{(\sqrt{2N} - 1)\pi i}{2N}\Big)\Big\rvert^2 = N  = \frac{1}{2h}.
\]
Since $\eps \kappa h \leq \kappa^2 h^2$, we see that
\[
\sum_{K\in\Th} \ak^2 \norm{r_\idx}_{\elm}^2 \lesssim \sum_{F\in\calFint}\eps^{-1}\af \norm{j_\idx}_{F}^2  \iff \NORM{u-\uh}^2 \simeq \sum_{F\in\calFint}\eps^{-1}\af \norm{j_\idx}_{F}^2,
\]
where we note that there is no data oscillation since $f\in\PP_1(\Th)$. Hence this provides an example where~\eqref{eq:jump_dominance} holds, and the factor $\kappa h/\eps$ can be made arbitrarily large.
\end{example}

\subsection{Flux equilibration on a submesh}

We finish with the following remark:

\begin{remark}[Flux equilibration on boundary-layer adapted submeshes]
The approach in~{\upshape\cite{Ainsw_Vej_guar_rob_RD_11,Ainsw_Vej_guar_rob_RD_14,Kopt_a_post_equi_fl_anis_RD_17}, following {\upshape\cite{Ains_Bab_rel_rob_a_post_RD_99}}}, can be seen as defining a flux $\bssub\in\Hdv$ that satisfies an equilibration property similar to~\eqref{eq:equilibration}, yet with the key difference that $\bssub$ is defined with respect to a submesh $\widetilde{\Th}$ of $\Th$ with thin elements that are adapted to the parameters $\eps$ and $\kappa$ and local mesh-size (see \eg\ {\upshape\cite[Fig.~3]{Ainsw_Vej_guar_rob_RD_11}}).
In this case, the argument in the proof of Proposition~\ref{prop:flux_lower_bound} does not apply, because the inverse inequality $\norm{(\eps\nabla \uh + \eps^{-1}\bssub)\scp\tn_F}_{F} \lesssim h_\elm^\mft \norm{\eps\nabla \uh + \eps^{-1}\bssub}_{\elm}$, $F \subset \p \elm$, $\elm \in \Th$, is not applicable when $\bssub \in \PP_{p^\prime}(\widetilde{\Th};\R^{\dim})$ but $\bssub\notin \PP_{p^\prime}(\Th;\R^{\dim})$.
This essentially shows how there are now two different approaches to constructing robust equilibrated flux estimators. Either the flux is computed as a piecewise polynomial vector field with respect to the original mesh, in which case the inclusion of a weight of the form of $\wk$ from~\eqref{eq_weights} in the upper bound is necessary, or one constructs the flux with respect to some other sufficiently rich subspace of $\Hdv$, such as a piecewise polynomial subspace with respect to an adapted submesh $\widetilde{\Th}$ of $\Th$, in which case the weights are not necessary.
\end{remark}

\appendix
\section{Explicit constants for the inverse inequality}\label{sec:appendix}

For each polynomial degree $p\geq 0$, let $\Cpo$ denote the best constant of the inverse inequality for the unit interval $(0,1)$, i.e.\
\begin{equation}\label{eq:Cpo}
\begin{aligned}
\norm{v^\prime}_{L^2(0,1)} \leq \Cpo \norm{v}_{L^2(0,1)} &&&\forall\, v\in \PP_p(0,1),
\end{aligned}
\end{equation}
where $\PP_p(0,1)$ denotes the space of univariate polynomials of degree at most $p$ on $(0,1)$.
It was shown in \cite{Koutschan_Neumuller_16} that, for all $p\geq 0$,
\begin{equation}\label{eq:Cpo_bound}
\Cpo \leq \frac{1}{\sqrt{2}} \sqrt{(p-1) p (p+1) (p+2)},
\end{equation}
where we have taken into account the fact that we consider $\Cpo$ on the unit interval $(0,1)$ rather than the interval $(-1,1)$ as in \cite{Koutschan_Neumuller_16}. This improves on earlier bounds, e.g.\ in \cite{Schwab_98}.

We will show here explicit bounds for the constants of the inverse inequality for hypercubes and simplices in terms of $\Cpo$.

\subsection{Unit hypercube}

For an integer $\dim\geq 1$, let $\sD$ be a shorthand notation for $\{1,\dots,\dim\}$.
Let $Q_{\dim} \coloneqq\{ x\in \R^\dim,\; \abs{x}_{\infty}\leq 1, \;x_i \geq 0 \;\;\forall i\in\sD \}$ denote the unit hypercube in $\R^\dim$, where $\abs{x}_{\infty}\coloneqq\max_{i\in\sD}\abs{x_i}$. Let $\PP_p(Q_{\dim})$ denote the space of polynomials of total degree at most $p$ on $Q_{\dim}$.

\begin{lemma}\label{lem:const_hypercube}
For all $\dim\geq 1$ and all $p\geq 0$, we have
\begin{equation}\label{eq:hypercube_bound}
\begin{aligned}
\norm{ v_{x_i} }_{L^2(Q_{\dim})} \leq \Cpo \norm{v}_{L^2(Q_{\dim})} &&&\forall\, v\in \PP_p(Q_{\dim}),\quad \forall i\in\sD.
\end{aligned}
\end{equation}
\end{lemma}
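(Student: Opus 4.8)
The plan is to reduce the multivariate inverse inequality on the hypercube to the known univariate bound~\eqref{eq:Cpo} by exploiting the tensor-product structure of $Q_{\dim}$ together with the fact that the polynomial degree constraint is preserved under freezing variables. First I would fix the index $i\in\sD$ and, without loss of generality by relabelling, take $i=\dim$, so that differentiation is in the last coordinate. The key observation is that although $v\in\PP_p(Q_{\dim})$ is constrained by \emph{total} degree, for each fixed value of the remaining variables $\hat x \eq (x_1,\dots,x_{\dim-1})$ the univariate slice $t\mapsto v(\hat x, t)$ is a polynomial of degree at most $p$ in $t$. Hence the one-dimensional inverse inequality~\eqref{eq:Cpo}--\eqref{eq:Cpo_bound} applies verbatim along each such slice.

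Concretely, I would write the squared $L^2$-norm on $Q_{\dim}$ as an iterated integral, integrating first in $x_\dim$ over $(0,1)$ and then over the remaining cube-section in $\hat x$. For almost every fixed $\hat x$, the function $w_{\hat x}(t)\eq v(\hat x,t)$ lies in $\PP_p(0,1)$, so applying~\eqref{eq:Cpo} gives
\[
\int_0^1 \abs{w_{\hat x}'(t)}^2\,\dd t \leq \Cpo^2 \int_0^1 \abs{w_{\hat x}(t)}^2\,\dd t.
\]
Since $w_{\hat x}'(t) = v_{x_\dim}(\hat x, t)$, I would then integrate this pointwise inequality over all admissible $\hat x$ (note the domain of $\hat x$ depends on $t$ because $Q_{\dim}$ is defined by $\abs{x}_\infty\leq 1$, but one can equally integrate over the full slab and restrict, or simply use Fubini on the indicator of $Q_{\dim}$). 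This yields $\norm{v_{x_\dim}}_{L^2(Q_{\dim})}^2 \leq \Cpo^2 \norm{v}_{L^2(Q_{\dim})}^2$, which is the claimed bound with the same constant $\Cpo$, independent of $\dim$.

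The only mild technical point — and the place where one must be slightly careful rather than the true ``hard part'' — is the handling of the integration region: the cube $Q_{\dim}$ is not literally a product $(0,1)^{\dim-1}\times(0,1)$ intersected trivially, and the cross-sections in $\hat x$ vary. The cleanest way to dispatch this is to note $Q_{\dim}=(0,1)^{\dim}$ in fact is a genuine product (it is the set $\{x_i\in[0,1]\}$, since $\abs{x}_\infty\leq 1$ with $x_i\geq 0$ forces each $x_i\in[0,1]$), so Fubini applies directly with fixed limits and no region-dependence arises. Thus the reduction is exact and constant-free beyond $\Cpo$. I expect no genuine obstacle here; the entire content is the slice argument, and the degree-preservation under fixing variables is what makes the univariate constant transfer without degradation in the dimension.
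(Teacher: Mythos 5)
Your proof is correct and takes essentially the same route as the paper's: after relabelling the index, you slice via Fubini (correctly noting that $Q_{\dim}$ is the genuine product $(0,1)^{\dim}$, so no region-dependence arises), apply the univariate inequality~\eqref{eq:Cpo} on each slice using that freezing the other variables preserves the degree bound, and integrate. The paper's proof is exactly this argument with $i=1$ instead of $i=\dim$.
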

\begin{proof}
After a possible re-labelling of the indices, it is enough to show that~\eqref{eq:hypercube_bound} holds for the case $i=1$. Then, writing $x=(x_1,x^\prime)$ with $x^\prime\in \R^{\dim-1}$, we see that
\[
\int_{\Qd} \abs{v_{x_1}}^2 \dd x = \int_{Q^{\dim-1}} \int_{0}^1 \abs{v_{x_1}(x_1,x^\prime)}^2 \dd x_1 \dd x^\prime \leq \Cpo^2 \int_{Q^{\dim-1}} \int_{0}^1 \abs{v(x_1,x^\prime)}^2\dd x_1 \dd x^{\prime} = \Cpo^2 \int_{\Qd} \abs{v}^2 \dd x,
\]
where we use the fact that $x_1\mapsto v(x_1,x^\prime)$ is in $\PP_p(0,1)$ for all $x^\prime$.
\end{proof}

\subsection{Unit simplex}

For a parameter $t>0$, let $K^{\dim}_{t}\coloneqq \{ x\in \R^\dim,\; \abs{x}_{1}\leq t, \;x_i \geq 0 \;\;\forall i\in\sD \}$, where $\abs{x}_1 \eq \sum_{i=1}^\dim \abs{x_i}$, denote the simplex in $\R^\dim$ with side-length $t$. If $t=1$, we adopt the simpler notation $\Kd\coloneqq K^{\dim}_1$.
Let $\Cpd$ denote the best constant such that
\begin{equation}\label{eq:simplex_inverse_inequality}
\begin{aligned}
\norm{ v_{x_i} }_{L^2(\Kd)} \leq \Cpd \norm{v}_{L^2(\Kd)} &&&\forall\, v\in \PP_p(\Kd),\quad \forall i\in\sD.
\end{aligned}
\end{equation}
We shall obtain here an explicit bound for the constant $\Cpd$ in terms of the space dimension $\dim$ and the constant $\Cpo$ of~\eqref{eq:Cpo}.

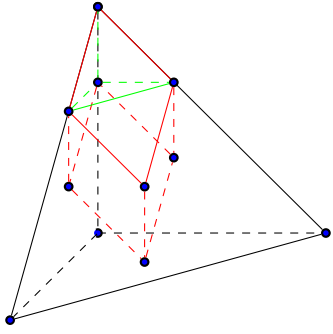
\begin{figure}[tbh]
\begin{center}
  \begin{tikzpicture}[scale=3,st1/.style={circle,draw=black,fill=blue,thick, minimum size=1.0mm, inner sep=0pt}]
  \node[st1] (A) at (1,0,0) {};
  \node[st1] (B) at (0,1,0) {};
  \node[st1] (C) at (0,0,1) {};
  \draw[dashed] (0,0) node[st1] {} -- (A);
  \draw (A)-- (B) -- (C) -- (A);
  \draw[dashed] (B) -- (0,0) -- (C);
  \node[st1] (a) at (0,1,0) {};
  \node[st1] (b) at (0,0.666,0.333) {};
  \node[st1] (c) at (0.3333,0.333,0.333) {};
  \node[st1] (d) at (0.333,00,0.333) {};
  \node[st1] (e) at (0,0.3333,0.333) {};
  \node[st1] (f) at (0,0.666,0.0) {};
  \node[st1] (g) at (0.333,0.333,0.0) {};
  \node[st1] (h) at (0.333,0.666,0.0) {};
  \draw[red] (a) -- (b) -- (c) -- (h) -- (a);
  \draw[red,dashed] (h) -- (g) -- (f) -- (e) -- (b) ;
  \draw[red,dashed] (e) -- (d) -- (g);
  \draw[red,dashed] (c) -- (d) ;
  \draw[green,dashed] (b)--(f)--(h) ;
  \draw[green,dashed] (f)--(a);
  \draw[green] (h)--(b);

\end{tikzpicture}
\caption{Subdivision of the unit simplex used in the proof of Theorem~\ref{thm:const_unit_simplex}. The unit simplex is shown for $\dim=3$, along with its sub-simplex $K_{\dagger}$ (edges shown in green) and sub-parallelepiped $Q_{\dagger}$ (edges shown in red).}
\label{fig:simplex_subdivision}
\end{center}
\end{figure}

\begin{theorem}\label{thm:const_unit_simplex}
For all $\dim \geq 1$ and for all $p\geq 0$, the best constant $\Cpd$ in \eqref{eq:simplex_inverse_inequality} satisfies
\begin{equation}\label{eq:const_unit_simplex}
\begin{aligned}
\Cpd \leq \frac{\sqrt{5}}{4}\, (2\sqrt{2})^{\dim} \Cpo.
\end{aligned}
\end{equation}
\end{theorem}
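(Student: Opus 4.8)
The plan is to reduce \eqref{eq:simplex_inverse_inequality} to a single partial derivative and then exploit the self-similar decomposition of $\Kd$ depicted in Figure~\ref{fig:simplex_subdivision}, which trades the simplex for a unit hypercube (where Lemma~\ref{lem:const_hypercube} already supplies a \emph{dimension-independent} constant $\Cpo$) together with a scaled copy of $\Kd$ itself. Since $\Kd$ is invariant under permutations of the coordinates, the best constant $\Cpd$ is the same for every index $i$, so it suffices to fix $v\in\PP_p(\Kd)$ and estimate $\norm{v_{x_1}}_{L^2(\Kd)}$.

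The geometric heart is an affine map $\Phi(\hat x)=v_0+\tfrac1{\dim}M\hat x$, where $v_0$ is a chosen vertex of $\Kd$ and the columns of $M$ are the edge vectors $v_j-v_0$ to the remaining vertices. This $\Phi$ sends the unit hypercube $\Qd$ onto the inscribed parallelepiped $Q_\dagger\subset\Kd$ of the figure — the scale $1/\dim$ being exactly what makes $Q_\dagger$ fit, since its far corner then lands on the opposite face — and sends the unit simplex onto the corner sub-simplex $K_\dagger$. On $Q_\dagger$ I would pull back to $\Qd$ and invoke Lemma~\ref{lem:const_hypercube}: writing $\nabla_x=\dim\,M^{-\mathsf{T}}\nabla_{\hat x}$ by the chain rule, the single derivative $v_{x_1}$ becomes a \emph{fixed} linear combination of the reference partials $\partial_{\hat x_j}\hat v$, each controlled by $\Cpo\norm{\hat v}_{L^2(\Qd)}$. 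It is precisely the explicit norm of the inverse edge matrix $M^{-1}$, together with the factor $\dim$ and the volume Jacobian $|\det M|$, that produces the dimensional constant $\tfrac{\sqrt5}{4}(2\sqrt2)^{\dim}$; computing $\norm{M^{-1}}$ and the Jacobian explicitly for the vertex/edge configuration of $\Kd$ is the one genuinely computational step, and I expect the factor $2\sqrt2$ per dimension and the $\sqrt5/4$ prefactor to emerge from this linear-algebra bookkeeping.

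For the sub-simplex $K_\dagger$ I would use self-similarity: as $K_\dagger$ is a scaled copy of $\Kd$ and $v|_{K_\dagger}\in\PP_p$, the inverse inequality there reproduces $\Cpd$ (rescaled by the similarity ratio), so this piece feeds back the very constant being bounded. The main obstacle is exactly this self-reference, compounded by the degeneration of $\Kd$ toward its opposite face: any naive slice-wise one-dimensional argument fails, because the slice lengths $L$ vanish and the factor $\Cpo/L$ blows up, whereas a global polynomial cannot legitimately be treated slice-by-slice. The decomposition is designed to absorb this degeneracy into $Q_\dagger$, where the constant is dimension-free, so the delicate point is to organize the pieces and the choice of $v_0$ so that the hypercube captures the problematic direction and the feedback from $K_\dagger$ is controlled — either it enters with a contraction factor, or the $K_\dagger$ contribution is reduced to a lower-dimensional inverse inequality — allowing the resulting estimate to be solved for $\Cpd$ and yielding \eqref{eq:const_unit_simplex}. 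I would close by checking the degenerate case $\dim=1$ (where the bound is merely loose, since $\tfrac{\sqrt5}{4}\cdot 2\sqrt2>1$) and verifying that no factor of $p$ enters beyond the single occurrence of $\Cpo$.
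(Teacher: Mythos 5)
There is a genuine gap, and it sits exactly where you flagged the delicacy: the self-similar feedback from the scaled sub-simplex cannot close. If $K_{\dagger}$ is a copy of $\Kd$ scaled by a factor $\lambda<1$, the best constant of the inverse inequality on $K_{\dagger}$ is $\Cpd/\lambda$, not $\lambda\,\Cpd$: inverse estimates \emph{deteriorate} under shrinking, as one sees by substituting $v(\lambda\,\cdot)$ into \eqref{eq:simplex_inverse_inequality}. Your scheme therefore produces an estimate of the form $\Cpd^2 \leq \max\{C_Q^2,\;\Cpd^2/\lambda^2\}$, whose right-hand side is dominated by the feedback term since $1/\lambda^2>1$; the inequality is vacuously true and cannot be solved for $\Cpd$. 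No choice of the vertex $v_0$ repairs this, because the feedback coefficient always exceeds one. (A secondary geometric point: your two pieces do not even cover $\Kd$; with scale $1/\dim$ the volumes of $Q_{\dagger}$ and of one scaled simplex sum to $(1/\dim)^{\dim}\left(1+1/\dim!\right)$, which is strictly below $\abs{\Kd}=1/\dim!$ already for $\dim=2$.)

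The paper's fix is to replace self-similarity by \emph{induction on the dimension}. It splits $\Kd$ into the slab $K_*=\{x\in \Kd,\, x_{\dim}\le 1-1/\dim\}$ and the corner $K_{\dagger}$. On $K_*$ the slice-wise argument you dismissed does work: each slice is a $(\dim-1)$-simplex of side length $1-x_{\dim}\ge 1/\dim$, so the scaling factor $1/(1-x_{\dim})$ is bounded by $\dim$ and one pays $\dim\, C_{p,\dim-1}$ with the \emph{lower}-dimensional constant — no self-reference. The corner is handled essentially as you propose (a shear $F$ with $K_{\dagger}\subset Q_{\dagger}\subset \Kd$, the pullback identity $v_{x_1}=\widetilde{v}_{\xi_2}-\widetilde{v}_{\xi_1}$, and the hypercube inequality of Lemma~\ref{lem:const_hypercube} at scale $1/\dim$), contributing $4\dim^2\Cpo^2$. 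Together this gives $\Cpd^2\le \dim^2\bigl(C_{p,\dim-1}^2+4\Cpo^2\bigr)$ — but note this recursion yields only factorial growth $\Cpd\lesssim \dim!\,\Cpo$, so your expectation that the constant $\frac{\sqrt{5}}{4}(2\sqrt{2})^{\dim}$ emerges from the linear algebra of $M^{-1}$ and the Jacobian is also off. The paper needs a second, different subdivision to obtain the exponential rate: for $\dim\ge 3$ it covers $\Kd$ by the two overlapping half-slabs $\{x_{\dim}\le 1/2\}$ and $\{x_{\dim-1}\le 1/2\}$; since the differentiated variable $x_1$ differs from both slicing variables, every slice factor is at most $2$, giving $\Cpd\le 2\sqrt{2}\,C_{p,\dim-1}$, hence $\Cpd\le (2\sqrt{2})^{\dim-2}C_{p,2}$, seeded with $C_{p,2}\le 2\sqrt{5}\,\Cpo$ from the first step. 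That two-step structure — dimension induction plus a corner-avoiding covering — is the content your proposal is missing.
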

\begin{proof}
The proof is based on an induction on the dimension, where we seek to bound $\Cpd$ in terms of $C_{p,\dim-1}$, $\Cpo$, and $\dim$.
Without loss of generality, it is enough to consider only the case $i=1$ in~\eqref{eq:simplex_inverse_inequality}, after a possible re-labelling of the indices. Then, writing $x=(x^\prime,x_d)$ with $x^\prime\in \R^{d-1}$, we have
$\int_{\Kd}\abs{v_{x_1}}^2 \dd x = \int_0^1 \int_{K^{\dim-1}_{1-x_{\dim}}} \abs{v_{x_1}}^2 \dd x^\prime \dd x_{\dim}$.
Since, for fixed $x_{\dim}\in (0,1)$, $x^\prime\mapsto v(x^\prime,x_d)$ is a polynomial of degree at most $p$ on $K^{\dim-1}_{1-x_{\dim}}$, it would be natural to apply the inverse inequality for simplices of dimension $d-1$ after a suitable scaling. However, a difficulty arises for $x_{\dim}$ close to $1$ due to the appearance of a negative power of $1-x_{\dim}$ inside the resulting integral. We can overcome this obstacle using an appropriate subdivision of the unit simplex and a change of variables.

The proof proceeds in two steps. We first treat the case $\dim=2$ and show that~\eqref{eq:const_unit_simplex} holds (we actually consider $\dim \geq 2$ below for the sake of generality), and then the induction is carried out on $\dim$ with a different argument, leading to a sharper bound than that would result from step~1 only.

{\em Step~1.} Let $d \geq 2$ and consider the partition of $K$ into $K_* \coloneqq \{ x\in K, x_{\dim} < 1-1/\dim \}$ and $K_{\dagger} \coloneqq K\setminus K_*$. Then, $\norm{v_{x_1}}_{L^2(\Kd)}^2=\norm{v_{x_1}}_{L^2(K_*)}^2+\norm{v_{x_1}}_{L^2(K_\dagger)}^2$, and the first term can be bounded as follows:
\begin{multline}\label{eq:const_unit_simplex_1}
 \norm{v_{x_1}}_{L^2(K_*)}^2 = \int_0^{1-1/d} \left(\int_{K^{\dim-1}_{1-x_{\dim}}} \abs{v_{x_1}}^2 \dd x^\prime \right)\dd x_{\dim}
\\ \leq  \int_0^{1-1/d} \left(\frac{C_{p,\dim-1}^2}{(1-x_d)^2} \int_{K^{\dim-1}_{1-x_{\dim}}} \abs{v}^2 \dd x^\prime \right)\dd x_{\dim}
 \leq d^2 C_{p,\dim -1}^{2} \norm{v}_{L^2(K_*)}^2,
 \end{multline}
where crucially we use the fact that $(1-x_d)^{-2}\leq d^2$ for $x_d\leq 1-1/d$. In order to bound the second term $\norm{v_{x_1}}^2_{L^2(K_{\dagger})}$, we introduce a change of coordinates in terms of the affine  map $F$ defined by
\[
F(\xi) \coloneqq e_{\dim} + \sum_{i=1}^{\dim} (e_{i-1} - e_{\dim}) \xi_i,
\]
where $e_0=0$, and $e_i$ is the $i$-th unit vector for $1\leq i  \leq \dim$. Letting $x=F(\xi)$, we have $x_{j} = \xi_{j+1}$ for $j\leq \dim -1$, and $x_{\dim} = 1-\sum_{i=1}^{d}\xi_i$. The inverse is then given by $\xi_1 = 1- \sum_{i=1}^d x_i$, and $\xi_{j}=x_{j-1}$ for $2\leq j \leq \dim$.
It is thus easily seen that $F$ is a bijection from $K$ onto itself, and that $F(0)=x_d$. Thus $F$ corresponds to a change of coordinates on the unit simplex.
Additionally, it can be shown that the Jacobian $\abs{\mathrm{det}DF}=1$.

Let $Q^{\dim}_{1/d} \coloneqq \{ \xi \in Q^{\dim}, \, \abs{\xi}_{\infty}\leq 1/d\}$ be a hypercube with side length $1/d$, and let $Q_{\dagger}$ be the parallelepiped obtained as the image of $Q^{\dim}_{1/\dim}$ under the mapping $F$, i.e.\ $Q_{\dagger} = F(Q^{\dim}_{1/d})$.
It is then easy, but tedious, to show that
 \begin{equation}\label{eq:F_mapping_inclusions}
 K_{\dagger} \subset Q_{\dagger}  \subset K.
 \end{equation}
Figure~\ref{fig:simplex_subdivision} illustrates the sets $K_{\dagger}$, $Q_{\dagger}$, and $K$ for the case $\dim=3$.
 Now, let $\widetilde{v}(\xi) = v(F(\xi))$ be the pullback of $v$ under $F$. Since $F$ is affine, $\widetilde{v} \in \PP_p(\Kd)$. It is also easy to check that $v_{x_1} = \widetilde{v}_{\xi_2} - \widetilde{v}_{\xi_1}$.
 Using the change of variables and the fact that $\abs{\mathrm{det} DF}=1$, it follows from \eqref{eq:F_mapping_inclusions} that
 \[
 \norm{ v_{x_1}}_{L^2(K_{\dagger})}^2 \leq  \norm{ v_{x_1}}_{L^2(Q_{\dagger})}^2  = \norm{\widetilde{v}_{\xi_2} - \widetilde{v}_{\xi_1}}_{L^2(Q^{\dim}_{1/d})}^2 \leq 2 (\norm{\widetilde{v}_{\xi_2}}_{L^2(Q^{\dim}_{1/d})}^2 + \norm{\widetilde{v}_{\xi_1}}_{L^2(Q^{\dim}_{1/d})}^2).
 \]
Applying the inverse inequality for hypercubes, namely $\norm{\widetilde{v}_{\xi_i}}_{L^2(Q^{\dim}_{1/d})}^2 \leq d^2 \Cpo^2 \norm{\widetilde{v}}_{L^2(Q^{\dim}_{1/\dim})}^2$, and changing back to the original variables, we then obtain from the second inclusion in \eqref{eq:F_mapping_inclusions} that
\begin{equation}\label{eq:const_unit_simplex_2}
\norm{v_{x_1}}^2_{L^2(K_{\dagger})} \leq 4 d^2 \Cpo^2 \norm{v}^2_{L^2(Q_{\dagger})} \leq 4 d^2 \Cpo^2 \norm{v}_{L^2(\Kd)}^2.
 \end{equation}
 Therefore, combining \eqref{eq:const_unit_simplex_1} and \eqref{eq:const_unit_simplex_2}, we arrive at $ \norm{v_{x_1}}^2_{L^2(\Kd)} \leq d^2( C_{p,\dim-1}^2 + 4 \Cpo^2 ) \norm{v}_{L^2(\Kd)}^2$, for any $v\in \PP_p(\Kd)$. This implies $
 \Cpd^2 \leq d^2( C_{p,\dim-1}^2 + 4 \Cpo^2 )$, and thus~\eqref{eq:Cpo} and an induction argument show that
\begin{equation}\label{eq:const_unit_simplex_3}
\Cpd \leq \textstyle{\left(1+  4 \sum_{j=1}^{\dim-1} \frac{1}{(j!)^2} \right)^{\frac{1}{2}}}\, d!\, \Cpo
\end{equation}
for any $d \geq 2$. This shows~\eqref{eq:simplex_inverse_inequality}, but with a worse constant than that of~\eqref{eq:const_unit_simplex} for $\dim \geq 3$. For this reason, we proceed in a second step in a different way.

{\em Step~2.} Let $\dim\geq 3$. We again subdivide the simplex $K$, this time as
\[
\begin{aligned}
K=  \{x\in K,\, x_d \leq 1/2\}\cup \{x\in K,\, x_{d-1}\leq 1/2\}.
\end{aligned}
\]
Furthermore, for any fixed $x_{\dim-1}$, $x_{\dim-1}^\prime=(x_1,\dots,x_{d-2},x_{d})^\prime\mapsto v(x)$ is a polynomial of degree at most $p$ on a simplex that is isometric to $K^{\dim-1}_{1-x_{\dim-1}}$. Let also $x_{\dim}^\prime=(x_1,\dots,x_{\dim-1})$. Crucially, since $\dim\geq 3$ and we subdivide above into two subsets, we can avoid the critical subset $K_{\dagger}$ of Step~1 as
\begin{multline}
\norm{v_{x_1}}_{L^2(\Kd)}^2 \leq
 \sum_{j=\dim-1}^{\dim} \int_{0}^{1/2}\left(\int_{K^{ \dim-1}_{1-x_{j}}} \abs{v_{x_1}}^2\dd x_{j}^\prime\right)\dd x_j
\\ \leq  \sum_{j=\dim-1}^\dim \int_{0}^{1/2} \frac{C_{p,\dim-1}^2}{(1-x_{j})^2} \left( \int_{K^{\dim-1}_{1-x_j}} \abs{v}^2 \dd x_j^\prime \right)\dd x_j\leq 8 C_{p,\dim-1}^2 \norm{v}_{\Kd}^2.
\end{multline}
It then follows by induction that $\Cpd \leq (2\sqrt{2})^{\dim-2} C_{p,2}$ for all $\dim \geq 3$. Since $C_{p,2}\leq 2\sqrt{5}\, C_{p,1}$ by \eqref{eq:const_unit_simplex_3}, we get~\eqref{eq:const_unit_simplex}.
\end{proof}
 Applying Theorem~\ref{thm:const_unit_simplex} to the cases $\dim=2$ and $\dim=3$ gives the following explicit bounds
 \begin{equation}
   \begin{aligned}
   C_{p,2} & \leq \sqrt{10 (p-1) p (p+1) (p+2)}, &&&  C_{p,3} & \leq  \sqrt{80 (p-1) p (p+1) (p+2) }.
   \end{aligned}
 \end{equation}

\subsection{General simplex}

Let $K$ be a simplex in $\R^\dim$, $\dim\geq 2$, and let $\hat{K}$ denote the unit simplex.
 Let $J_K$ denote the differential of the affine transformation mapping  $T_K\colon\hat{K}\tends K$.
For $\bm{v}\in \RTNK$, we define the Piola transformation $\bm{\hat{v}} \in \bm{RTN}_p(\hat{K})$ by
\begin{equation}\label{eq:Piola}
\bm{\hat{v}}(\hat{x}) = \abs{\det J_K} J_K^{-1}[ \bm{v}\circ T_K(\hat{x}) ].
\end{equation}

\begin{lemma}[Ciarlet~\cite{Ciar_78} Thm 3.1.2 and~\cite{Di_Pietr_Ern_book_12}] \label{lem:geometric_bounds}
 There holds
\begin{equation}\label{eq:geometric_bounds}
\begin{aligned}
\norm{J_K}_2 \leq \frac{h_K}{{\rho_{\hat K}}}, &&& \norm{J_K^{-1}}_2 \leq \frac{\sqrt{2}}{\rho_K},&&& \abs{\det J_K} = \frac{\abs{K}_d}{\abs{\hat{K}}_d}, &&& \frac{\abs{\p K}_{\dim-1}}{\abs{K}_{\dim}} \leq (d+1)d \,\sreg\, h_K^{-1}.
\end{aligned}
\end{equation}
\end{lemma}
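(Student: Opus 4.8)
The plan is to establish the four estimates of~\eqref{eq:geometric_bounds} separately, since the first three are formal consequences of the affine structure of $T_K$ whereas the last requires a genuine geometric argument. For the operator-norm bound on $J_K$, I would use the spectral-norm characterization $\norm{J_K}_2 = \sup_{\abs{e}=1}\abs{J_K e}$ together with the largest inscribed ball of $\hat K$. Writing $\hat c$ for the center of that ball, which has diameter $\rho_{\hat K}$, the two points $\hat c \pm \tfrac12\rho_{\hat K} e$ lie in $\hat K$ for every unit vector $e$; applying $T_K$ and subtracting gives $\rho_{\hat K}\,J_K e = T_K(\hat c + \tfrac12\rho_{\hat K} e) - T_K(\hat c - \tfrac12\rho_{\hat K} e)$, a difference of two points of $K$, hence of length at most $h_K$. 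Taking the supremum over unit $e$ yields $\norm{J_K}_2 \le h_K/\rho_{\hat K}$. The bound on $\norm{J_K^{-1}}_2$ follows from the identical argument applied to $T_K^{-1}\colon K\to\hat K$, giving $\norm{J_K^{-1}}_2 \le h_{\hat K}/\rho_K$; since $\hat K$ has vertices $0,e_1,\dots,e_\dim$ and $\dim\ge 2$, its diameter is $\abs{e_i-e_j}=\sqrt{2}$, which produces the stated constant $\sqrt{2}/\rho_K$.

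The determinant identity is immediate from the change-of-variables formula: integrating the constant $1$ over $K$ and pulling back to $\hat K$ gives $\abs{K}_\dim = \abs{\det J_K}\,\abs{\hat K}_\dim$, whence $\abs{\det J_K} = \abs{K}_\dim/\abs{\hat K}_\dim$. The surface-to-volume bound is the step I expect to be the main obstacle, as it is the only one that does not follow purely formally. I would argue face by face: enumerating the $\dim+1$ faces of $K$, for the face $F$ opposite a given vertex the simplex volume formula reads $\abs{K}_\dim = \tfrac1\dim\, h_F\,\abs{F}_{\dim-1}$, where $h_F$ is the height from that vertex to $F$. The crucial observation is that the largest inscribed ball of $K$, of diameter $\rho_K$, must fit inside the slab bounded by the hyperplane carrying $F$ and the opposite vertex, which forces $h_F \ge \rho_K$. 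Hence $\abs{F}_{\dim-1} = \dim\,\abs{K}_\dim/h_F \le \dim\,\abs{K}_\dim/\rho_K$, and summing over the $\dim+1$ faces gives $\abs{\p K}_{\dim-1} \le (\dim+1)\dim\,\abs{K}_\dim/\rho_K$.

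Finally, I would convert the inradius into the mesh size using $\rho_K \ge h_K/\sreg$, which is exactly the definition of the shape-regularity parameter in~\eqref{eq:regularity_parameter}, turning the previous inequality into $\abs{\p K}_{\dim-1}/\abs{K}_\dim \le (\dim+1)\dim\,\sreg\,h_K^{-1}$ as claimed. The one delicate point worth spelling out carefully is the inequality $h_F \ge \rho_K$; I would make it rigorous by projecting the inscribed ball onto the direction normal to $F$ and noting that its projection is an interval of length $\rho_K$ contained in the height interval $[0,h_F]$.
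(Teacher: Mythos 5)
Your proof is correct and coincides with the classical arguments behind the references the paper cites for this lemma, which itself gives no proof: the inscribed-ball/diameter argument for $\norm{J_K}_2$ and (applied to $T_K^{-1}$, with $h_{\hat K}=\sqrt{2}$) for $\norm{J_K^{-1}}_2$ is exactly Ciarlet's Theorem 3.1.2, the determinant identity is the standard change of variables, and the surface-to-volume bound via $\abs{K}_d=\tfrac1d h_F\abs{F}_{d-1}$ and $h_F\ge\rho_K$ is the argument in Di Pietro--Ern. All four steps check out, including the one delicate point you flag ($h_F\ge\rho_K$, justified by projecting the inscribed ball, of diameter $\rho_K$ under the paper's convention, onto the normal of $F$) and the final conversion $\rho_K\ge h_K/\sreg$ from the definition \eqref{eq:regularity_parameter}.
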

Note that in~Lemma~\ref{lem:geometric_bounds}, we have used the fact that the diameter of the unit simplex is $\sqrt{2}$ for all $\dim\geq 2$.

\begin{lemma}[Hesthaven \& Warburton \cite{HesthavenWarburton2003}]
  Let $v \in \PP_{p}(K)$. Then
  \[
  \norm{v}_{\p K} \leq \sqrt{\frac{(p+1)(p+\dim)}{\dim} \frac{\abs{\p K}_{\dim-1}}{\abs{K}_{\dim}} } \norm{v}_K.
  \]
\end{lemma}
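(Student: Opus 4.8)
The plan is to localize the estimate to a single face of a single reference simplex, so that all the difficulty is concentrated in one dimensionless constant, and then to recombine by summation. First I would record that, for a fixed face $F$ of a simplex $K$, the ratio $\norm{v}_{F}^2/\norm{v}_K^2$ transforms covariantly under affine maps: if $T\colon \Kd\to K$ is an affine bijection from the unit simplex, with $\abs{\det J_T}=\abs{K}_{\dim}/\abs{\Kd}_{\dim}$, and $w\coloneqq v\circ T\in\PP_p(\Kd)$, then $\norm{v}_F^2=(\abs{F}_{\dim-1}/\abs{\hat F}_{\dim-1})\,\norm{w}_{\hat F}^2$ and $\norm{v}_K^2=(\abs{K}_\dim/\abs{\Kd}_\dim)\,\norm{w}_{\Kd}^2$, since the Jacobians of $T$ along a flat face and in the bulk are each constant. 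Consequently the best constant $C_F^K\coloneqq\sup_{v\in\PP_p(K)}\norm{v}_F^2/\norm{v}_K^2$ equals $\gamma\,\abs{F}_{\dim-1}/\abs{K}_\dim$, where $\gamma\coloneqq(\abs{\Kd}_\dim/\abs{\hat F}_{\dim-1})\,C_{\hat F}^{\Kd}$ does not depend on $K$; applying the same identity to the volume-preserving, vertex-permuting affine self-maps of $\Kd$ shows moreover that $\gamma$ is the same for every face $\hat F$ of $\Kd$.

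Summing the resulting single-face bound $\norm{v}_F^2\le\gamma\,(\abs{F}_{\dim-1}/\abs{K}_\dim)\,\norm{v}_K^2$ over the $\dim+1$ faces and using $\sum_F\abs{F}_{\dim-1}=\abs{\p K}_{\dim-1}$ immediately yields $\norm{v}_{\p K}^2\le\gamma\,(\abs{\p K}_{\dim-1}/\abs{K}_\dim)\,\norm{v}_K^2$. It therefore only remains to prove $\gamma=(p+1)(p+\dim)/\dim$. Choosing the coordinate face $\hat F=\{x_\dim=0\}$ of $\Kd$, for which $\abs{\Kd}_\dim/\abs{\hat F}_{\dim-1}=1/\dim$, this is equivalent to the sharp single-face estimate $\norm{v}_{\hat F}^2\le(p+1)(p+\dim)\,\norm{v}_{\Kd}^2$, which already reproduces the one-dimensional endpoint (Christoffel) value $(p+1)^2$ when $\dim=1$.

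The exact value of this constant is the step I expect to be the real obstacle. A naive slicing, writing $\norm{v}_{\Kd}^2=\int_{K^{\dim-1}_1}\int_0^{1-\abs{x'}_1}\abs{v(x',x_\dim)}^2\,dx_\dim\,dx'$ and applying the scaled one-dimensional bound $\abs{w(0)}^2\le (p+1)^2 L^{-1}\norm{w}_{L^2(0,L)}^2$ on each transverse segment, fails to close: it generates the weight $(1-\abs{x'}_1)^{-1}$, which is nonintegrable near the opposite vertex for $\dim\ge2$ --- the same pathology met in Step~1 of the proof of Theorem~\ref{thm:const_unit_simplex}. I would instead characterize $C_{\hat F}^{\Kd}$ as the largest eigenvalue of the generalized eigenproblem $M_{\hat F}\,\mathbf v=\lambda\,M_{\Kd}\,\mathbf v$ between the face and bulk mass matrices, and evaluate it in the Koornwinder--Dubiner orthogonal basis of $\PP_p(\Kd)$ built from Jacobi polynomials in collapsed coordinates. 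Since that basis is $L^2(\Kd)$-orthogonal and its trace on $\{x_\dim=0\}$ is, up to scaling, the corresponding orthogonal basis of the face, $M_{\Kd}$ is diagonal while $M_{\hat F}$ is block-diagonal with rank-one blocks in the transverse index; the extremal eigenvalue then reduces to an explicit sum of squared endpoint values of Jacobi polynomials normalized by their $L^2$ weights, and standard Jacobi identities deliver the closed form $(p+1)(p+\dim)$. This is exactly the computation performed in~\cite{HesthavenWarburton2003}.
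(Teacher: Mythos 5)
Your proposal is correct, and it matches the paper's treatment: the paper states this lemma without proof, citing Hesthaven \& Warburton, and your argument --- after the sound affine-covariance reduction, the face-permutation symmetry, and the summation over the $\dim+1$ faces --- ultimately defers the one genuinely hard step (the sharp reference constant $(p+1)(p+\dim)$ for a single face of the unit simplex) to exactly the eigenvalue computation in that same reference. The scaffolding you add (including the correct observation that naive transverse slicing fails due to the nonintegrable weight, and the $\dim=1$ sanity check giving $(p+1)^2$) is valid and faithfully reconstructs how the cited per-face result yields the $\p K$ version stated in the paper.
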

Therefore, for $\bm{v}\in \RTNK$, we have
\[
\begin{aligned}
h_K^{1/2}\norm{\bm{v}\cdot \bm{n}}_{\p K} \leq \Cpdp \norm{\bm{v}}_K, &&&
\Cpdp \coloneqq \sqrt{(d+1)(p+2)(p+d+1) \sreg}.
\end{aligned}
\]

\begin{lemma}
Let $K$ be a simplex in $\R^{\dim}$ and $\bm{v}\in\RTNK$. Then,
  \begin{equation}\label{eq:divergence_constant_inverse_inequality}
  \begin{aligned}
  h_K \norm{\nabla \cdot \bm{v}}_K \leq \CpdK \norm{\bm{v}}_K, &&& \CpdK \coloneqq \sqrt{2d}\, \sreg \, C_{p+1,\dim},
  \end{aligned}
  \end{equation}
  where $\Cpd$ is characterized in Theorem~\ref{thm:const_unit_simplex}.
\end{lemma}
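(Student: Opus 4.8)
The plan is to transfer the inequality to the reference simplex $\hat K$ via the Piola transformation~\eqref{eq:Piola}, reduce it there to the scalar inverse inequality~\eqref{eq:simplex_inverse_inequality} of Theorem~\ref{thm:const_unit_simplex} applied componentwise, and then carry the metric factors back to $K$ using the geometric bounds of Lemma~\ref{lem:geometric_bounds}. Throughout, $\hat{\bm v}\in\bm{RTN}_p(\hat K)$ denotes the Piola pullback of $\bm v\in\RTNK$.

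First I would record the divergence-compatibility of the Piola transform: the standard identity $(\Div\bm v)\circ T_K=\abs{\det J_K}^{-1}\,\Div\hat{\bm v}$ holds. Combining this with the change of variables $\dd x=\abs{\det J_K}\,\dd\hat x$ gives the clean scaling
\[
\norm{\Div\bm v}_K=\abs{\det J_K}^{-1/2}\,\norm{\Div\hat{\bm v}}_{\hat K}.
\]
The crucial observation at this stage is that each scalar component $\hat v_i$ of $\hat{\bm v}$ is a polynomial of degree at most $p+1$, not $p$, because $\RTNK=\PP_p(\hat K;\R^\dim)+\PP_p(\hat K)\bm x$. Hence $\Div\hat{\bm v}=\sum_{i=1}^{\dim}\partial_{\hat x_i}\hat v_i$ can be estimated by applying~\eqref{eq:simplex_inverse_inequality} \emph{with degree $p+1$} to each summand; the triangle inequality followed by a Cauchy--Schwarz inequality over the $\dim$ indices yields
\[
\norm{\Div\hat{\bm v}}_{\hat K}\leq C_{p+1,\dim}\sum_{i=1}^{\dim}\norm{\hat v_i}_{\hat K}\leq \sqrt{\dim}\,C_{p+1,\dim}\,\norm{\hat{\bm v}}_{\hat K},
\]
which is where both the factor $\sqrt{\dim}$ and the degree-shifted constant $C_{p+1,\dim}$ in $\CpdK$ originate.

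Finally I would pass back to $K$. Taking Euclidean norms in~\eqref{eq:Piola} and changing variables produces $\norm{\hat{\bm v}}_{\hat K}\leq \abs{\det J_K}^{1/2}\,\norm{J_K^{-1}}_2\,\norm{\bm v}_K$. Chaining the three displayed estimates, the $\abs{\det J_K}^{-1/2}$ from the divergence scaling cancels the $\abs{\det J_K}^{1/2}$ from the vector change of variables, leaving $\norm{\Div\bm v}_K\leq\sqrt{\dim}\,C_{p+1,\dim}\,\norm{J_K^{-1}}_2\,\norm{\bm v}_K$. Inserting $\norm{J_K^{-1}}_2\leq\sqrt{2}/\rho_K$ from~\eqref{eq:geometric_bounds}, multiplying by $h_K$, and using the shape-regularity bound $h_K/\rho_K\leq\sreg$ then delivers exactly $\CpdK=\sqrt{2\dim}\,\sreg\,C_{p+1,\dim}$.

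I expect the only genuinely substantive step to be the recognition that the relevant polynomial degree on $\hat K$ is $p+1$ rather than $p$, which is what forces the appearance of $C_{p+1,\dim}$; everything else is routine once this is noted. The main place for care is the bookkeeping of the determinant factors, namely verifying that the $\abs{\det J_K}^{-1/2}$ coming from the divergence transformation cancels precisely the $\abs{\det J_K}^{1/2}$ coming from the vector-norm change of variables, so that only $\norm{J_K^{-1}}_2$ (and no power of $\abs{\det J_K}$) survives in the final constant.
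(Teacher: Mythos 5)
Your proof is correct and takes essentially the same route as the paper: Piola transformation to the reference simplex, application of Theorem~\ref{thm:const_unit_simplex} componentwise with degree $p+1$ (precisely because $\hat{v}_i \in \PP_{p+1}(\hat{K})$), and the geometric bounds $\norm{J_K^{-1}}_2 \leq \sqrt{2}/\rho_K$ and $h_K/\rho_K \leq \sreg$, with the $\abs{\det J_K}$ factors cancelling exactly as you track them. The only cosmetic difference is that the paper carries out the $\sqrt{d}$ step in squared form, $\norm{\nabla\cdot\hat{\bm{v}}}_{\hat{K}}^2 \leq d \sum_{i=1}^{d} \norm{\hat{v}_{i,x_i}}_{\hat{K}}^2$, rather than via your triangle inequality followed by Cauchy--Schwarz, which yields the identical constant.
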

\begin{proof}
Using the Piola Transformation, we have $ \nabla \cdot \bm{v}=\nabla_{\hat{x}}\cdot \bm{\hat{v}} / \abs{\det J_K}$, therefore,
$\norm{\nabla \cdot\bm{v}}_K^2 \leq \abs{\det J_K}^{-1} \norm{\nabla\cdot \bm{\hat{v}}}_{\hat{K}}^2$. Then, since $\bm{\hat{v}}_{i} \in \PP_{p+1}(\hat{K})$ for $i\in \{1{:}\dim\}$, we apply Theorem~\ref{thm:const_unit_simplex} to obtain
$\norm{\nabla\cdot \bm{\hat{v}}}_{\hat{K}}^2 \leq d \sum_{i=1}^{\dim} \norm{\bm{\hat{v}}_{i,x_i}}_{\hat{K}}^2 \leq d C_{p+1,\dim}^2 \norm{\bm{\hat{v}}}_{\hat{K}}^2$.
Then, using the definition of the Piola transformation, it is seen that $\norm{\bm{\hat{v}}}_{\hat{K}}^2\leq \norm{J_K^{-1}}_{2}^2 \abs{\det J_K}\norm{\bm{v}}_K^2$. We then use the bound $\norm{J_K^{-1}}_{2} \leq \sqrt{2} \sreg h_K^{-1}$ from~\eqref{eq:geometric_bounds} to find that $h_K\norm{\nabla\cdot\bm{v}}_K \leq \sqrt{2d} \sreg C_{p+1,\dim}\norm{\bm{v}}_K$, which finishes the proof.
\end{proof}

\def\polhk#1{\setbox0=\hbox{#1}{\ooalign{\hidewidth
  \lower1.5ex\hbox{`}\hidewidth\crcr\unhbox0}}}
  \def\polhk#1{\setbox0=\hbox{#1}{\ooalign{\hidewidth
  \lower1.5ex\hbox{`}\hidewidth\crcr\unhbox0}}} \def\cprime{$'$}

\end{document}